\theoremstyle{definition}
\newtheorem{theorem}{Theorem}
\newtheorem{corollary}{Corollary}
\newtheorem{proposition}{Proposition}
\theoremstyle{lemma}
\newtheorem{remark}{Remark}
\theoremstyle{remark}
\newtheorem{lem}{Lemma}[section]
\date{}
\begin{document}
\title{Certain geometric structure of $\Lambda$-sequence spaces}
\author{ Atanu Manna \footnote{Author's e-mail: atanu.manna@iict.ac.in/atanumanna@maths.iitkgp.ernet.in}\\
\textit{\small{Indian Institute of Carpet Technology}} \\
\textit{\small{Chauri Road, Bhadohi- 221401, Uttar Pradesh, India}}}
\maketitle
\begin{abstract}
The $\Lambda$-sequence spaces $\Lambda_p$ for $1< p\leq\infty$ and its generalization $\Lambda_{\hat{p}}$ for $1<\hat{p}<\infty$, $\hat{p}=(p_n)$ is introduced. The James constants and strong $n$-th James constants of $\Lambda_p$ for $1<p\leq\infty$ is determined. It is proved that generalized $\Lambda$-sequence space $\Lambda_{\hat{p}}$ is embedded isometrically in the Nakano sequence space $l_{\hat{p}}(\mathbb{R}^{n+1})$ of finite dimensional Euclidean space $\mathbb{R}^{n+1}$. Hence it follows that sequence spaces $\Lambda_p$ and $\Lambda_{\hat{p}}$ possesses the uniform Opial property, property $(\beta)$ of Rolewicz and weak uniform normal structure. Moreover, it is established that $\Lambda_{\hat{p}}$ possesses the coordinate wise uniform Kadec-Klee property. Further necessary and sufficient conditions for element $x\in S(\Lambda_{\hat{p}})$ to be an extreme point of $B(\Lambda_{\hat{p}})$ are derived. Finally, estimation of von Neumann-Jordan and James constants of two dimensional $\Lambda$-sequence space $\Lambda_2^{(2)}$ is being carried out.
\end{abstract}
\textbf{\emph{Keywords:}} Ces\`{a}ro sequence space; Nakano sequence space; James constant; von Neumann-Jordan constant; Extreme point; Kadec-Klee property.\\
\textbf{\emph{2010 Mathematics Subject Classification:}} Primary 46B20; Secondary 46A45, 46B45.
\section{Introduction}
There are several important geometric constants of Banach spaces such as von Neumann-Jordan constant, James constant, Dunkl-Williams constant, Khintchine constant, Zb\u{a}ganu, Ptolemy constants and so on. The space is how much close (or far) to a Hilbert space measured by von Neumann-Jordan constant and Dunkl-Williams constant. The uniform non-squareness of a unit ball in a real Banach space measured by James constant (or sometimes called \emph{James non-square constant}).
These constants occupied a prominent place in the study of geometrical properties of Banach spaces, and have investigated recently by many researchers (see e.g. \cite{FUST}, \cite{MAL1}, \cite{MAL2}).\\
Maligranda et al. \cite{MAL1} obtained the classical James constant and $n$-th James constant for the Ces\`{a}ro sequence spaces $ces_p$, $1<p\leq\infty$, which was first introduced and studied by Shiue \cite{SHUE} and Leibowitz \cite{LEIB}. Let $l^0$ be the space of all real sequences and $\mathbb{N}_0$ be the set of all natural numbers $\mathbb{N}$ including $0$, i.e., $\mathbb{N}_0=\mathbb{N}\cup \{0\}$ and $x=(x_n)_{n\in \mathbb{N}_0}\in l^0$. Throughout the text we shall write $x=(x_n)$ instead of $x=(x_n)_{n\in \mathbb{N}_0}$. Then the Ces\`{a}ro sequence spaces $ces_p$, $1<p\leq\infty$ are defined as follows:
\begin{center}
$ces_p=\Big\{x\in l^0:  \Big(\displaystyle\sum_{n=0}^{\infty}\Big(\frac{1}{n+1}\sum_{k=0}^{n}|x_k|\Big)^p\Big)^{\frac{1}{p}}<\infty\Big\}$ for $1< p<\infty$\\
$ces_\infty=\Big\{x\in l^0:  \displaystyle\sup_{n\in \mathbb{N}_0}\frac{1}{n+1}\sum_{k=0}^{n}|x_k|<\infty\Big\}$.
\end{center}
The Ces\`{a}ro sequence space $ces_p$ is generalized to $ces_{\hat{p}}$ for $\hat{p}=(p_n)$, $p_n > 1$, $n\in \mathbb{N}_0$ (\cite{JOHN}) and defined as
\begin{center}
$ces_{\hat{p}}=\Big\{x\in l^0:  \Big(\displaystyle\sum_{n=0}^{\infty}\Big(\frac{1}{n+1}\sum_{k=0}^{n}|x_k|\Big)^{p_n}\Big)^{\frac{1}{p_n}}<\infty\Big\}$.
\end{center}
Several authors (\cite{cui2000banachsaks}, \cite{CUIHUD}, \cite{CUI}, \cite{FORA}, \cite{MAN}, \cite{PET1}, \cite{PET2}, \cite{SAEJ}, \cite{SANH}, \cite{SANH1}, \cite{SUAN}, \cite{SUAN1}) studied geometric properties such as Opial property, Kadec-Klee property, property $(\beta)$ of Rolewicz, weak uniform normal structure etc. for the spaces $ces_p$ and $ces_{\hat{p}}$. These constants play very important role in the study of fixed point theory. For example, Opial property has several applications in the Banach fixed point theory, differential equations, integral equations etc. On the other hand, Kadec-Klee property applied to established certain results in the ergodic theory (see \cite{PRUS}).\\
In this present paper, we are willing to study certain geometric structure, as the title said, of $\Lambda$-sequence spaces $\Lambda_p$, $1<p\leq\infty$ and its generalization $\Lambda_{\hat{p}}$, for $1< \hat{p}<\infty$, $\hat{p}=(p_n)$. These spaces $\Lambda_p$ and $\Lambda_{\hat{p}}$ are not only gives the sequence spaces $ces_p$ and $ces_{\hat{p}}$ but also include sequence spaces generated by Riesz weighted means, N\"{o}rlund means etc. \cite{BOOS} in special cases. For example, our generalization include the following important results:\\
$(i)$ $ces_p$ and $ces_{\hat{p}}$ has the uniform Opial property (see \cite{CUIHUD} and \cite{PET1} respectively),\\
$(ii)$ $ces_p$ and $ces_{\hat{p}}$ has the property $(\beta)$ (see \cite{cui2000banachsaks} and \cite{SAEJ} respectively),\\
$(iii)$ $ces_p$ has weak uniform normal structure \cite{CUIHUD} and $ces_{\hat{p}}$ has the same \cite{SAEJ},\\
$(iv)$ $ces_{\hat{p}}$ possesses the uniform Kadec Klee property \cite{PET2} and Kadec-Klee property (\cite{SUAN}, \cite{SUAN1}),\\
$(v)$ The James constants $J(ces_p)=2$, $J_n^s(ces_p)=n$ for $1<p\leq \infty$ and $J(ces_2^{(2)})=\sqrt{2+\frac{2}{\sqrt{5}}}$ \cite{MAL1},\\
$(vi)$ $ces_p$ and $ces_{\hat{p}}$ has extreme point.\\
Therefore studying geometric structure of the spaces $\Lambda_p$ and $\Lambda_{\hat{p}}$ is a unified study of geometric structure for the known sequence spaces.
\subsection{Sequence spaces $\Lambda_p$, $1< p\leq\infty$}
The notions of $\Lambda$-strong convergence was first introduced by M\'{o}ricz \cite{MORI} and is behind the genesis of $\Lambda$-sequence spaces. Let $\Lambda=\{\lambda_k: k=0, 1, 2, \ldots\}$ be a non-decreasing sequence of positive numbers tending to $\infty$, i.e., $0<\lambda_0<\lambda_1<\lambda_2<\ldots$, $\lambda_k\rightarrow\infty$ with $\frac{\lambda_{k+1}}{\lambda_k}\rightarrow 1$ as $k\rightarrow\infty$ and $x=(x_k)\in l^0$.\\
Define sequence spaces $\Lambda_p$, for $1< p \leq\infty$ as
\begin{center}
$\Lambda_p=\big\{x=(x_k)\in l^0:  \|x\|_p<\infty\big\}$ for $1< p<\infty$\\
$\Lambda_\infty=\big\{x=(x_k)\in l^0:  \|x\|_\infty<\infty\big\}$ for $p=\infty$,
\end{center}
where $\|.\|_p$, $1< p<\infty$ and $\|x\|_\infty$ are defined by
\begin{align*}
\|x\|_p &= \Big(\displaystyle\sum_{n=0}^{\infty}\big(\frac{1}{\lambda_n}\sum_{k=0}^{n}(\lambda_k-\lambda_{k-1})|x_k|\big)^p\Big)^{\frac{1}{p}} & \mbox{~and~}
\|x\|_\infty &= \displaystyle\sup_{n\in \mathbb{N}_0}\frac{1}{\lambda_n}\sum_{k=0}^{n}(\lambda_k-\lambda_{k-1})|x_k|.
\end{align*}
It is a routine work to established that these spaces $(\Lambda_p, \|.\|_p)$ for $1<p<\infty$ and $(\Lambda_\infty, \|.\|_\infty)$ are Banach spaces.
\subsection{Sequence spaces $\Lambda_{\hat{p}}$, $\hat{p}=(p_n)$, $p_n>1$}
Let $\hat{p}=(p_n)$ be a bounded sequence of positive real numbers such that $p_n>1$ for each $n\in \mathbb{N}_0$. Define a convex modular $\sigma(x)$ on $l^0$ as $\sigma(x)=\displaystyle\sum_{n=0}^{\infty}\Big(\frac{1}{\lambda_n}\sum_{k=0}^{n}(\lambda_k-\lambda_{k-1})|x_k|\Big)^{p_n}$ and denote $\Lambda x(n)=\frac{1}{\lambda_n}\displaystyle\sum_{k=0}^{n}(\lambda_k-\lambda_{k-1})|x_k|$.\\
Then we define the following set
\begin{center}
$\Lambda_{\hat{p}}=\big\{x=(x_k)\in l^0:  \sigma(r x)<\infty \mbox{~for some~} r>0\big\}$,
\end{center} which is a normed linear space equipped with the Luxemberg norm
$$\|x\|_{\hat{p}}= \inf \Big\{ r>
0 : \sigma\Big(\frac {x} {r}\Big) \leq 1 \Big\}.$$
Indeed $(\Lambda_{\hat{p}}, \|x\|_{\hat{p}})$ becomes a Banach space.
\begin{remark}
In particular, \\
$(i)$ if we put $\lambda_n=n+1$, then sequence spaces $\Lambda_p$ and $\Lambda_{\hat{p}}$ reduces to $ces_p$ and $ces_{\hat{p}}$ respectively (\cite{JOHN}, \cite{LEIB} and \cite{SHUE}). \\
$(ii)$ choose $q=(q_k)=(\lambda_k-\lambda_{k-1})$ and $Q_n=\displaystyle\sum_{k=0}^{n}q_k=\lambda_n$, then sequence spaces $\Lambda_p$ and $\Lambda_{\hat{p}}$ reduces to $ces[p, q]$ and $ces[{\hat{p}}, {\hat{q}}]$ respectively \cite{JOHN}.\\
$(iii)$ when $p_{n-k}=(\lambda_k-\lambda_{k-1})$ for $k=0, 1, \ldots, n$ and $\overline{N}=\displaystyle\sum_{k=0}^{n}q_k=\lambda_n$, then we get absolute type N\"{o}rlund sequence spaces. The N\"{o}rlund sequence spaces of non-absolute type is studied by Wang \cite{WANG}.
\end{remark}
\section{James constants of $\Lambda_p$ for $1< p \leq\infty$}
The unit ball of a normed linear space is \emph{uniformly non-square} if and only if there is a positive number $\delta$ such that there do not exist members $x$ and $y$ of the unit ball for which $\|\frac{1}{2}(x+y)\|>1-\delta$ and $\|\frac{1}{2}(x-y)\|>1-\delta$ (see \cite{JAMES}).\\
The \emph{James constant} (or \emph{measure of uniform non-squareness}) of a real Banach space $(X, \|\cdot\|)$ with $dim(X)\geq 2$ is denoted by $J(X)$ and is defined as (\cite{DIES}, \cite{GAO})
\begin{center}
$J(X)=\sup\{\min(\|x+y\|, \|x-y\|):~ x, y\in X,~ \|x\|=1,~ \|y\|=1\}.$
\end{center}
We begin with finding the James constants of the sequence spaces $\Lambda_p$ for $1<p\leq\infty$.
\begin{theorem}{\label{thm1}}
The James constants of $\Lambda$-sequence spaces $\Lambda_p$ for $1<p\leq\infty$ is $2$. In notation $J(\Lambda_p)=2$, $1< p\leq \infty$.
\end{theorem}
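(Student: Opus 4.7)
The upper bound $J(\Lambda_p)\le 2$ is immediate from the triangle inequality, so the content of the theorem is the matching lower bound. The plan is to exhibit, for all large $k$, a pair of unit vectors $x_k,y_k\in\Lambda_p$ such that $\|x_k+y_k\|_p$ and $\|x_k-y_k\|_p$ both tend to $2$; together with the trivial upper bound this forces $J(\Lambda_p)=2$.

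The test vectors I would use are the consecutive normalised coordinate vectors $x_k=e_k/\|e_k\|_p$ and $y_k=e_{k+1}/\|e_{k+1}\|_p$ (for $p<\infty$ this requires $S_k:=\sum_{n\ge k}\lambda_n^{-p}<\infty$, the natural regime, and automatic for the model case $\lambda_n=n+1$ of Remark~1(i)). Two features of this choice drive the estimate. First, $x_k$ and $y_k$ have disjoint supports, so $|x_k\pm y_k|=|x_k|+|y_k|$; since the norm $\|\cdot\|_p$ only sees absolute values of the coordinates, this automatically forces $\|x_k+y_k\|_p=\|x_k-y_k\|_p$, and only one of the two quantities needs to be computed. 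Second, a direct computation shows $\Lambda x_k(n)=1/(\lambda_n S_k^{1/p})$ for $n\ge k$, so $\Lambda x_k$ has the simple profile $\mathrm{const}/\lambda_n$ and the resulting $p$-th power sum collapses into tails of $\sum\lambda_n^{-p}$. Working out the calculation reduces everything to the closed form
\[
\|x_k\pm y_k\|_p^{\,p}\;=\;(1-r_k)+\bigl(1+r_k^{1/p}\bigr)^p,\qquad r_k:=S_{k+1}/S_k\in(0,1),
\]
so the whole problem reduces to showing $r_k\to 1$. The $p=\infty$ case is handled by the same pair of vectors: here $\Lambda x_k(n)=\lambda_k/\lambda_n$ on $\{n\ge k\}$, and a direct supremum calculation gives $\|x_k\pm y_k\|_\infty=1+\lambda_k/\lambda_{k+1}$, which already tends to $2$ by hypothesis.

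The main obstacle -- and the only step where the standing assumption $\lambda_{n+1}/\lambda_n\to 1$ is essential -- is proving $r_k\to 1$, equivalently $\lambda_k^{p}S_k\to\infty$. My argument will rewrite $\lambda_k^{p}S_k=\sum_{n\ge k}(\lambda_k/\lambda_n)^p$, fix an arbitrary $N\ge 1$, and observe that for each $j\in\{0,1,\dots,N-1\}$ the ratio $\lambda_{k+j}/\lambda_k=\prod_{i=0}^{j-1}\lambda_{k+i+1}/\lambda_{k+i}$ is a finite product of factors each tending to $1$, hence $(\lambda_k/\lambda_{k+j})^p\to 1$ as $k\to\infty$. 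Summing over $j$ yields $\liminf_k\lambda_k^{p}S_k\ge N$, and since $N$ is arbitrary we get $r_k\to 1$; the closed form then forces $\|x_k\pm y_k\|_p\to 2$, establishing the lower bound and hence $J(\Lambda_p)=2$.
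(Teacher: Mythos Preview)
Your proposal is correct and follows the paper's approach: the same test vectors $x_k=e_k/\|e_k\|_p$, $y_k=e_{k+1}/\|e_{k+1}\|_p$, the same reduction to the tail ratio $r_k=S_{k+1}/S_k\to 1$, and the same $p=\infty$ computation yielding $\|x_k\pm y_k\|_\infty=1+\lambda_k/\lambda_{k+1}$. The only differences are in execution---you derive the exact closed form $\|x_k\pm y_k\|_p^{\,p}=(1-r_k)+(1+r_k^{1/p})^p$ and prove $r_k\to 1$ by the direct $\liminf\ge N$ argument, whereas the paper keeps only the lower bound $(1+r_k^{1/p})^p$ and obtains the limit via the discrete Bernoulli--de l'H\^opital (Stolz--Ces\`aro) rule; your explicit flagging of the implicit hypothesis $S_k=\sum_{n\ge k}\lambda_n^{-p}<\infty$ is also a point the paper passes over silently.
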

\begin{proof}
First we discuss the case $1< p<\infty$. For each $m=0, 1, 2, \ldots$, we denote $e_m=(e_{mk})=(0, 0, \ldots, 0, 1, 0, \ldots)$, where $1$ is at the $m$-th position. Note that
\begin{align*}
\frac{1}{\lambda_n}\displaystyle\sum_{k=0}^{n}(\lambda_k-\lambda_{k-1})e_{mk} &= \left\{
\begin{array}{ll}
0 & \quad \mbox{~if~} n<m,\\
\frac{\lambda_m-\lambda_{m-1}}{\lambda_n} & \quad \mbox{~if~} n\geq m.
\end{array}\right.
\end{align*}
Let $x_m=(x_{mk})$ and $y_m=(y_{mk})$, where $x_m=\frac{e_m}{\|e_m\|_p}$ and $y_m=\frac{e_{m+1}}{\|e_{m+1}\|_p}$ for each $m=0, 1, 2, \ldots$.
Then $\|x_m\|_p=1$ and $\|y_m\|_p=1$.\\
Now we find the value of $\left\| x_m\pm y_m\right\|_p$ for $m=0, 1, 2, \ldots$. We have
\begin{align} {\label{eqn1}}
&\|x_m\pm y_m\|_p^p \nonumber\\
&=\displaystyle\sum_{n=0}^{\infty}\Big(\frac{1}{\lambda_n}\displaystyle\sum_{k=0}^{n}(\lambda_k-\lambda_{k-1})|x_{mk}\pm y_{mk}|\Big)^p \nonumber\\
&=\displaystyle\sum_{n=0}^{\infty}\Big(\frac{1}{\lambda_n}\displaystyle\sum_{k=0}^{n}(\lambda_k-\lambda_{k-1})\Big|\frac{e_{mk}}{\|e_m\|_p}\pm \frac{e_{(m+1)k}}{\|e_{m+1}\|_p}\Big|~\Big)^p \nonumber \\
&=\Big(\frac{\lambda_m-\lambda_{m-1}}{\lambda_m\|e_m\|_p}\Big)^p+ \displaystyle\sum_{n=m+1}^{\infty}\Big(\frac{1}{\lambda_n}
\Big(\frac{\lambda_m-\lambda_{m-1}}{\|e_m\|_p}+\frac{\lambda_{m+1}-\lambda_{m}}{\|e_{m+1}\|_p}\Big)\Big)^p \nonumber \\
&\geq \displaystyle\sum_{n=m+1}^{\infty}\Big(\frac{1}{\lambda_n}\Big(\frac{\lambda_m-\lambda_{m-1}}{\|e_m\|_p}+\frac{\lambda_{m+1}-\lambda_{m}}
{\|e_{m+1}\|_p}\Big)\Big)^p \nonumber \\
&=\Big(1+\frac{\lambda_m-\lambda_{m-1}}{\lambda_{m+1}-\lambda_{m}}.\frac{\|e_{m+1}\|_p}{\|e_{m}\|_p}\Big)^p.
\end{align}
Therefore by removing the power $p$, we get $\|x_m\pm y_m\|_p\geq 1+\frac{\lambda_m-\lambda_{m-1}}{\lambda_{m+1}-\lambda_{m}}.\frac{\|e_{m+1}\|_p}{\|e_{m}\|_p}$. We claim that $$\displaystyle\lim_{m\rightarrow\infty}\frac{\lambda_m-\lambda_{m-1}}{\lambda_{m+1}-\lambda_{m}}.\frac{\|e_{m+1}\|_p}{\|e_m\|_p}=1.$$
Since
$$\Big(\frac{\lambda_m-\lambda_{m-1}}{\lambda_{m+1}-\lambda_{m}}\Big)^p.\frac{\|e_{m+1}\|_p^p}{\|e_m\|_p^p}=
\frac{\displaystyle\sum_{n=m+1}^{\infty}\frac{1}{{\lambda_n^p}}}{\displaystyle\sum_{n=m}^{\infty}\frac{1}{{\lambda_n^p}}}
=\frac{\displaystyle\sum_{n=m+1}^{\infty}\frac{1}{{\lambda_n^p}}}{\frac{1}{{\lambda_m^p}}+\displaystyle\sum_{n=m+1}^{\infty}
\frac{1}{{\lambda_n^p}}}
=\Bigg(1+ \frac{\frac{1}{{\lambda_m^p}}}{\displaystyle\sum_{n=m+1}^{\infty}\frac{1}{{\lambda_n^p}}}\Bigg)^{-1},$$
by the discrete version of Bernoulli-de l' Hospital rule, we have
\begin{equation}{\label{eqn2}}
\displaystyle\lim_{m\rightarrow\infty}\frac{\frac{1}{{\lambda_m^p}}}{\displaystyle\sum_{n=m+1}^{\infty}
	\frac{1}{{\lambda_n^p}}}
=\lim_{m\rightarrow\infty}\frac{\frac{1}{{\lambda_m^p}}-\frac{1}{{\lambda_{m+1}^p}}}{\frac{1}{{\lambda_{m+1}^p}}}
=\lim_{m\rightarrow\infty}\frac{\frac{1}{{\lambda_m}^p}}{\frac{1}{{\lambda_{m+1}^p}}}-1=\lim_{m\rightarrow\infty}\Big(\frac{\lambda_{m+1}}{{\lambda_m}}\Big)^p-1=0
\end{equation}
Eqn (\ref{eqn2}) proves our claim. Therefore from Eqn (\ref{eqn1}), we have $\|x_m\pm y_m\|_p\geq 2$ as $m\rightarrow\infty$. Since we always know that   $\|x_m\pm y_m\|_p\leq 2$, so $J(\Lambda_p)=2$ for $1<p<\infty$.\\
For $p=\infty$, we choose $x_m=\frac{\lambda_m}{\lambda_m-\lambda_{m-1}}e_m$ and $y_m=\frac{\lambda_m}{\lambda_{m+1}-\lambda_m}e_{m+1}$. Then it is easy to verify that $\|x_m\|_{\infty}=1$, $\|y_m\|_{\infty}=1$ and $\|x_m\pm y_m\|_{\infty}=1+\frac{\lambda_m}{\lambda_{m+1}}\rightarrow2$ as $m\rightarrow\infty$. Hence $J(\Lambda_\infty)=2$.
\end{proof}
\begin{corollary}
$(i)$ Choose $\lambda_n=n+1$, then $J(ces_p)=2$ for $1<p\leq \infty$ (\cite{MAL1}).\\
$(ii)$ If $Q_n=\displaystyle\sum_{k=0}^{n}q_k=\lambda_n$, where $q=(q_k)=(\lambda_k-\lambda_{k-1})$ then $J(ces [p, q])=2$ for $1<p\leq \infty$.
\end{corollary}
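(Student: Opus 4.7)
The plan is to derive both assertions directly from Theorem~\ref{thm1} by verifying, in each case, that the given scalar sequence fits the hypotheses on $(\lambda_k)$ stated at the outset of Section~1.1, and that the resulting $\|\cdot\|_p$ norm coincides with the Ces\`{a}ro, respectively the weighted Ces\`{a}ro, norm. Because Theorem~\ref{thm1} is already proved in full generality, no new inequalities need to be established; the corollary is a pure specialization.

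For part $(i)$, I would take $\lambda_n=n+1$. This sequence is strictly increasing, positive, tends to $\infty$, and satisfies $\lambda_{n+1}/\lambda_n=(n+2)/(n+1)\to 1$, so it is an admissible choice in Section~1.1. With the natural convention $\lambda_{-1}=0$ used implicitly in the definition of $\|\cdot\|_p$, one gets $\lambda_k-\lambda_{k-1}=1$ for every $k\in\mathbb{N}_0$, so the weight $(\lambda_k-\lambda_{k-1})/\lambda_n$ collapses to $1/(n+1)$ and $\|x\|_p$ becomes exactly the standard Ces\`{a}ro norm. Hence $\Lambda_p=ces_p$ isometrically, and Theorem~\ref{thm1} delivers $J(ces_p)=2$ for $1<p\leq\infty$.

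For part $(ii)$, I would set $\lambda_k:=Q_k=\sum_{j=0}^{k}q_j$; then automatically $q_k=\lambda_k-\lambda_{k-1}$ (again with $\lambda_{-1}=0$), so the factor $(\lambda_k-\lambda_{k-1})/\lambda_n$ in $\|\cdot\|_p$ becomes $q_k/Q_n$. This is precisely the weighted Ces\`{a}ro average defining $ces[p,q]$. The positivity, strict monotonicity and divergence of $(Q_n)$, together with $Q_{n+1}/Q_n\to 1$, are exactly the standing assumptions placed on the weight sequence in $ces[p,q]$, and under them $\Lambda_p=ces[p,q]$ as normed spaces. Theorem~\ref{thm1} then yields $J(ces[p,q])=2$ for $1<p\leq\infty$.

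Since the whole argument is a verification that hypotheses hold rather than a separate estimate, I do not anticipate any substantial obstacle. The only subtle point is the boundary convention $\lambda_{-1}=0$ needed to make the $k=0$ summand in $\|\cdot\|_p$ meaningful; once that is made explicit, the identifications $\Lambda_p=ces_p$ and $\Lambda_p=ces[p,q]$ are immediate, and the corollary follows.
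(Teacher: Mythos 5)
Your proposal is correct and coincides with the paper's (implicit) argument: the corollary is given without a separate proof, being exactly the specialization of Theorem~\ref{thm1} via the identifications of Remark~1 ($\lambda_n=n+1$ giving $ces_p$, and $\lambda_n=Q_n$ with $q_k=\lambda_k-\lambda_{k-1}$, $\lambda_{-1}=0$, giving $ces[p,q]$), which is precisely what you verify. The only point worth flagging is that in part $(ii)$ the conclusion is obtained under the paper's standing hypothesis on $\Lambda$, i.e.\ $Q_{n+1}/Q_n\rightarrow 1$ (equivalently $q_{n+1}/Q_n\rightarrow 0$), so the statement should be read with that restriction on the weight sequence rather than for arbitrary positive weights.
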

A Banach space is said to be \emph{uniformly non-$l_n^{(1)}$} if there is $\delta\in (0,1)$ such that for any $x_0, x_1, x_2, \ldots, x_{n-1}$ from the unit ball of $X$, we have $\displaystyle\min_{\epsilon_k=\pm 1}\big\|\sum_{k=0}^{n-1}\epsilon_kx_k\big\|\leq n(1-\delta)$.\\
This definition leads to the notion of \emph{$n$-th James constant} (or the \emph{measure of uniformly non-$l_n^{(1)}$}) $J_n(X)$, $n\in \mathbb{N}$ of a Banach space $X$ is defined as
\begin{equation}
J_n(X)=\sup\Big\{\displaystyle\min_{\varepsilon_k=\pm 1}\big\|\sum_{k=0}^{n-1}\epsilon_kx_k\big\|:~ x_k\in X,~ \|x_k\|\leq1, k=0, 1, 2, \ldots, {n-1}\Big\} ~\cite{DIES}.\nonumber
\end{equation}
When we restrict to unit sphere of a real Banach space $X$ then the James constants (or \emph{$n$-th strong James constants}) are denoted by $J_n^s(X)$, $n\in \mathbb{N}$ and defined by
\begin{center}
$J_n^s(X)=\sup\Big\{\displaystyle\min_{\varepsilon_j=\pm1}\Big\|\displaystyle\sum_{j=0}^{n-1}\varepsilon_jx_j\Big\|: \|x_j\|=1, j=0, 1, 2, \ldots, n-1\Big\}.$
\end{center}
It is to be noted that $J_n^s(X)\leq J_n(X)\leq n$ and $J_2^s(X)=J_2(X)=J(X)$ \cite{MAL1}.
\begin{theorem}
The strong $n$-th James constant $J_n^s(\Lambda_p)=n$ for $1<p<\infty$, and $J_n^s(\Lambda_\infty)=n$.
\end{theorem}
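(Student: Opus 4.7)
The plan is to imitate the two-vector construction from Theorem \ref{thm1} with $n$ consecutive (normalized) basis vectors. The key observation is that in Theorem \ref{thm1} we already obtained a lower bound on $\|x_m \pm y_m\|_p$ that did not depend on the choice of sign, because the tail portion of the sum (indices $k \geq m+1$) picked up \emph{both} basis vectors through absolute values. The same mechanism will work for $n$ vectors simultaneously, and because the bound is sign-free, it bounds the minimum over signs directly.

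For $1<p<\infty$, I would fix $n$ and set $x_j^{(m)} = e_{m+j}/\|e_{m+j}\|_p$ for $j=0,1,\ldots,n-1$. These are unit vectors in $\Lambda_p$. For any choice of signs $\varepsilon_j = \pm 1$, I would discard all terms with $k < m+n-1$ and note that on the tail $k \geq m+n-1$ every basis vector $e_{m+j}$ contributes its full coordinate, so
\[
\Big\|\sum_{j=0}^{n-1}\varepsilon_j x_j^{(m)}\Big\|_p^p \;\geq\; \sum_{k=m+n-1}^{\infty}\frac{1}{\lambda_k^p}\Big(\sum_{j=0}^{n-1}\frac{\lambda_{m+j}-\lambda_{m+j-1}}{\|e_{m+j}\|_p}\Big)^{p}.
\]
Using $\sum_{k=m+n-1}^{\infty}\lambda_k^{-p} = \|e_{m+n-1}\|_p^{p}/(\lambda_{m+n-1}-\lambda_{m+n-2})^p$, this rearranges to
\[
\Big\|\sum_{j=0}^{n-1}\varepsilon_j x_j^{(m)}\Big\|_p \;\geq\; \sum_{j=0}^{n-1}\frac{(\lambda_{m+j}-\lambda_{m+j-1})\,\|e_{m+n-1}\|_p}{(\lambda_{m+n-1}-\lambda_{m+n-2})\,\|e_{m+j}\|_p}.
\]
Theorem \ref{thm1} has already shown that the single ratio $\tfrac{(\lambda_m-\lambda_{m-1})\|e_{m+1}\|_p}{(\lambda_{m+1}-\lambda_{m})\|e_m\|_p}\to 1$ as $m\to\infty$; iterating this $n-j-1$ times, each of the $n$ summands tends to $1$, so the lower bound tends to $n$. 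Since the bound is independent of $\varepsilon_j$, the minimum over signs is at least $n-o(1)$, giving $J_n^s(\Lambda_p) \geq n$; combined with the universal bound $J_n^s(X) \leq n$, equality follows.

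For $p=\infty$, I would use $x_j^{(m)} = \tfrac{\lambda_{m+j}}{\lambda_{m+j}-\lambda_{m+j-1}}\,e_{m+j}$, which have $\|\cdot\|_\infty=1$ as in the $n=2$ case. For any signs, evaluating the sup defining $\|\cdot\|_\infty$ at index $k=m+n-1$ absorbs all $n$ summands with their absolute values and yields
\[
\Big\|\sum_{j=0}^{n-1}\varepsilon_j x_j^{(m)}\Big\|_\infty \;\geq\; \frac{1}{\lambda_{m+n-1}}\sum_{j=0}^{n-1}\lambda_{m+j},
\]
and each ratio $\lambda_{m+j}/\lambda_{m+n-1}$ tends to $1$ because $\lambda_{k+1}/\lambda_k\to 1$. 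So this tends to $n$, again yielding $J_n^s(\Lambda_\infty)=n$.

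The only genuinely delicate step is the asymptotic $\tfrac{b_{m+j}}{b_{m+n-1}}\to 1$ where $b_m = \|e_m\|_p/(\lambda_m-\lambda_{m-1})$; but this is an immediate iteration of the Bernoulli-de l'Hospital computation already carried out in the proof of Theorem \ref{thm1}, since a one-step limit of $1$ propagates to any finite number of steps. The rest is essentially bookkeeping: making sure the discarded head of the series is nonnegative (trivial) and checking that the lower bound does not depend on the signs $\varepsilon_j$ (built in by using the tail where all supports are simultaneously included).
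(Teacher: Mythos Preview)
Your proposal is correct and follows essentially the same approach as the paper: the same test vectors $e_{m+j}/\|e_{m+j}\|_p$ (resp.\ $\tfrac{\lambda_{m+j}}{\lambda_{m+j}-\lambda_{m+j-1}}e_{m+j}$ for $p=\infty$), the same tail estimate from index $m+n-1$ onward, and the same Bernoulli--de l'Hospital asymptotic. The only cosmetic difference is that the paper first bounds $\sum_{i=m}^{m+n-1}(\lambda_i-\lambda_{i-1})/\|e_i\|_p \ge n\cdot(\lambda_m-\lambda_{m-1})/\|e_m\|_p$ (using the hidden monotonicity of $i\mapsto(\lambda_i-\lambda_{i-1})/\|e_i\|_p=(\sum_{k\ge i}\lambda_k^{-p})^{-1/p}$) and then shows one ratio tends to $1$, whereas you keep the sum of $n$ ratios and show each tends to $1$ by iterating the one-step limit; these are equivalent packagings of the same computation.
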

\begin{proof}
In previous theorem, we have established the result for the case when $n=2$. Therefore we deduce this result for $n\geq 3$. Let $m \in \mathbb{N}_0$, $n\in \mathbb{N}$ such that $n\geq 3$ and put,
$$x_{j,m}=\frac{e_{m+j}}{\|e_{m+j}\|_p}, j=0, 1, 2, \ldots, n-1.$$
With this setting it is clear that $\|x_{j,m}\|_p=1$ for each $j=0, 1, 2, \ldots, n-1$. Since $x_{j_1,m}$ and $x_{j_2,m}$ have disjoint supports for $j_1\neq j_2$, so it implies that $$\displaystyle\min_{\varepsilon_j=\pm1}\Big\|\displaystyle\sum_{j=0}^{n-1}\varepsilon_jx_{j,m}\Big\|_p=
\Big\|\displaystyle\sum_{j=0}^{n-1}x_{j,m}\Big\|_p.$$
Choose $b_{m,l}=\displaystyle\sum_{i=m}^{m+l}\frac{\lambda_i-\lambda_{i-1}}{\|e_i\|_p}$, $l=0, 1, 2, \ldots, n-1$, then we have
\begin{align*}
\Big\|\displaystyle\sum_{j=0}^{n-1}x_{j,m}\Big\|_p^p
&=\Big\|\displaystyle\sum_{j=m}^{m+n-1}\frac{e_j}{\|e_j\|_p}\Big\|_p^p\\
&=\Big\|\Big(0, 0, \ldots, 0, \frac{1}{\|e_m\|_p}, \frac{1}{\|e_{m+1}\|_p}, \ldots, \frac{1}{\|e_{m+n-1}\|_p}, 0, 0, \ldots\Big)\Big\|_p^p\\
&=\Big(\frac{b_{m,0}}{\lambda_m}\Big)^p+ \Big(\frac{b_{m,1}}{\lambda_{m+1}}\Big)^p+\ldots+\Big(\frac{b_{m,n-1}}{\lambda_{m+n-1}}\Big)^p+\Big(\frac{b_{m,n-1}}{\lambda_{m+n}}\Big)^p+\ldots\\
&\geq \displaystyle\sum_{k=m+n-1}^{\infty}\Big(\frac{b_{m,n-1}}{\lambda_{k}}\Big)^p\\
&=(b_{m,n-1})^p\displaystyle\sum_{k=m+n-1}^{\infty}\Big(\frac{1}{\lambda_{k}}\Big)^p\\
&=\Big(\displaystyle\sum_{i=m}^{m+n-1}\frac{\lambda_i-\lambda_{i-1}}{\|e_i\|_p}\Big)^p.\Big(\frac{\|e_{m+n-1}\|_p}{\lambda_{m+n-1}-\lambda_{m+n-2}}\Big)^p\\
& \geq n^p\Big(\frac{\lambda_m-\lambda_{m-1}}{\|e_m\|_p}\Big)^p.\Big(\frac{\|e_{m+n-1}\|_p}{\lambda_{m+n-1}-\lambda_{m+n-2}}\Big)^p
\end{align*}
Consequently, we have
\begin{equation}{\label{eqn3}}
n\geq\Big\|\displaystyle\sum_{j=0}^{n-1}x_{j,m}\Big\|_p\geq n \frac{\lambda_m-\lambda_{m-1}}{\lambda_{m+n-1}-\lambda_{m+n-2}}\frac{\|e_{m+n-1}\|_p}{\|e_m\|_p}.
\end{equation}
We claim that $\displaystyle\lim_{m\rightarrow\infty}\frac{\lambda_m-\lambda_{m-1}}{\lambda_{m+n-1}-\lambda_{m+n-2}}.\frac{\|e_{m+n-1}\|_p}{\|e_m\|_p}=1$. Denote $a_{m+n-1}:=\displaystyle\sum_{j=m+n-1}^{\infty}\frac{1}{{\lambda_j^p}}$.\\
Then we have \begin{align*}
&\Big(\frac{\lambda_m-\lambda_{m-1}}{\lambda_{m+n-1}-\lambda_{m+n-2}}\Big)^p.\frac{\|e_{m+n-1}\|_p^p}{\|e_m\|_p^p}
=\frac{\displaystyle\sum_{j=m+n-1}^{\infty}\frac{1}{{\lambda_j^p}}}{\displaystyle\sum_{j=m}^{\infty}\frac{1}{{\lambda_j^p}}}
=\frac{a_{m+n-1}}{\frac{1}{{\lambda_m^p}}+\frac{1}{{\lambda_{m+1}^p}}+\ldots+ \frac{1}{{\lambda_{m+n-2}^p}}+ a_{m+n-1}}\\
&=\Big(1+\frac{1}{{\lambda_m^pa_{m+n-1}}}+\frac{1}{{\lambda_{m+1}^pa_{m+n-1}}}+\ldots+ \frac{1}{{\lambda_{m+n-2}^p}a_{m+n-1}}\Big)^{-1}.
\end{align*}
Now we find out the limits $\displaystyle\lim_{m\rightarrow\infty}\frac{1}{{\lambda_{m+i}^pa_{m+n-1}}}$ for $i=0, 1, \ldots, n-1$. When $i=0$, applying Bernoulli-de l' Hospital rule, we get
\begin{equation}{\label{eqn4}}
\displaystyle\lim_{m\rightarrow\infty}\frac{\frac{1}{{\lambda_m^p}}}{\displaystyle\sum_{j=m+n-1}^{\infty}
	\frac{1}{{\lambda_j^p}}}
=\lim_{m\rightarrow\infty}\frac{\frac{1}{{\lambda_m^p}}-\frac{1}{{\lambda_{m+1}^p}}}{\frac{1}{{\lambda_{m+n-1}^p}}}
=\lim_{m\rightarrow\infty}\Big(\frac{\lambda_{m+n-1}}{{\lambda_m}}\Big)^p-\lim_{m\rightarrow\infty}\Big(\frac{\lambda_{m+n-1}}{{\lambda_{m+1}}}\Big)^p=0.
\end{equation} Therefore Eqn (\ref{eqn4}) proved our claim. Similarly, we get $\displaystyle\lim_{m\rightarrow\infty}\frac{1}{{\lambda_{m+i}^pa_{m+n-1}}}=0$ for $i= 1, \ldots, n-1$. Hence by Eqn (\ref{eqn3}), we get $\Big\|\displaystyle\sum_{j=0}^{n-1}x_{j,m}\Big\|_p\geq n$ as $m\rightarrow\infty$.\\
Since $\Big\|\displaystyle\sum_{j=0}^{n-1}x_{j,m}\Big\|_p\leq n$, so by definition we obtain $J_n^s(\Lambda_p)=n$ for $1<p<\infty$.\\
For the case $p=\infty$, we choose $$x_{j,m}=\frac{\lambda_{m+j}}{\lambda_{m+j}-\lambda_{m+j-1}}e_{m+j}, \mbox{~for~} j=0, 1, \ldots,n-1.$$
It is easy to find that $\|x_{j,m}\|_{\infty}=1$ and for any fixed $n\in \mathbb{N}$
\begin{align*}
\Big\|\displaystyle\sum_{j=0}^{n-1}\varepsilon_jx_{j,m}\Big\|_{\infty}=\Big\|\displaystyle\sum_{j=0}^{n-1}x_{j,m}\Big\|_{\infty}& =1+\frac{\lambda_m}{\lambda_{m+n-1}}
+\frac{\lambda_{m+1}}{\lambda_{m+n-1}}+\ldots+\frac{\lambda_{m+n-2}}{\lambda_{m+n-1}}\\
& \rightarrow n \mbox{~as~} m\rightarrow\infty\\
&(\mbox{Since} \displaystyle\lim_{m\rightarrow\infty}\frac{\lambda_{m+i}}{\lambda_{m+n-1}}=1 \mbox{~for each~} i=0,1,2,\ldots,n-2).
\end{align*}
Again by definition, we have $J_n^s(\Lambda_\infty)=n$ and thus the theorem is proved.
\end{proof}
\begin{corollary}
$(i)$ Choose $\lambda_n=n+1$, then $J_n^s(ces_p)=n$ for $1<p\leq \infty$ (\cite{MAL1}).\\
$(ii)$ If $Q_n=\displaystyle\sum_{k=0}^{n}q_k=\lambda_n$, where $q=(q_k)=(\lambda_k-\lambda_{k-1})$ then $J_n^s(ces [p, q])=n$ for $1<p\leq \infty$.
\end{corollary}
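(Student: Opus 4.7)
The plan is to derive both parts directly from the preceding theorem by recognizing each case as a particular admissible choice of the sequence $\lambda = (\lambda_k)$, so that the corresponding Banach space $\Lambda_p$ coincides isometrically with the named classical space, and then invoking $J_n^s(\Lambda_p)=n$.

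For part (i), I would set $\lambda_n = n+1$. Three verifications are needed: this sequence is strictly increasing, it tends to $\infty$, and $\lambda_{k+1}/\lambda_k = (k+2)/(k+1) \to 1$. Hence $(\lambda_n)$ is a bona fide $\Lambda$-sequence as specified at the top of Section~1.1, so the previous theorem applies. With this choice, $\lambda_k - \lambda_{k-1} = 1$ (using the convention $\lambda_{-1}=0$) and $1/\lambda_n = 1/(n+1)$, which collapses the expression $\frac{1}{\lambda_n}\sum_{k=0}^{n}(\lambda_k - \lambda_{k-1})|x_k|$ to $\frac{1}{n+1}\sum_{k=0}^{n}|x_k|$. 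Hence $\|\cdot\|_p$ on $\Lambda_p$ agrees exactly with the Ces\`aro norm, so $\Lambda_p = ces_p$ isometrically, and $J_n^s(ces_p) = J_n^s(\Lambda_p) = n$ by the previous theorem.

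For part (ii), the identification $\Lambda_p = ces[p,q]$ under the substitution $\lambda_n = Q_n = \sum_{k=0}^{n} q_k$, $q_k = \lambda_k - \lambda_{k-1}$ is already recorded in Remark~1(ii). Given the standing assumption that $(q_k)$ is chosen so that $(Q_n)$ is positive, strictly increasing, unbounded, and satisfies $Q_{n+1}/Q_n \to 1$ — equivalently, so that the resulting $(\lambda_n)$ is a $\Lambda$-sequence — the hypotheses of the theorem are met and yield $J_n^s(ces[p,q]) = J_n^s(\Lambda_p) = n$ with no further work.

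There is no real mathematical obstacle here: the content is entirely the previous theorem, and the corollary is purely a matter of verifying that the two named sequence spaces arise from legitimate choices of $\Lambda$-sequence and of bookkeeping that the resulting norms match. Once Remark~1 and the admissibility check on $\lambda_{k+1}/\lambda_k \to 1$ are cited, both statements follow in one line each.
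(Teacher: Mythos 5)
Your proposal is correct and coincides with the paper's (implicit) argument: the corollary is obtained purely by specializing the preceding theorem via the identifications in Remark~1, checking that $\lambda_n=n+1$ (resp.\ $\lambda_n=Q_n$) is an admissible $\Lambda$-sequence so that the norms coincide. Your explicit remark that $Q_{n+1}/Q_n\to 1$ must be assumed in part (ii) is a sensible precision, but otherwise there is nothing to add.
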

\section{Geometric properties of $\Lambda_{\hat{p}}$}
Let $(X, \|.\|)$ be a Banach space and being a subspace of $l^0$. As usual, we denote $S(X)$ and $B(X)$ for the unit sphere and closed unit ball respectively. A point $x\in S(X)$ is said to be an extreme point of $B(X)$ if there does not exist two distinct points $y$, $z \in B(X)$ such that $2x=y+z$. The concept of extreme point plays an important role in the study of Krein-Milman theorem, Choquet integral representation theorem etc. \\
Foralewski \cite{FOR1} introduced the notion of \emph{coordinatewise Kadec-Klee property} of a Banach space and is denoted by \emph{($H_c$)}. $X$ is said to possess the property \emph{($H_c$)}, if $x\in X$ and every sequence $(x_l)\subset X$ such that
\begin{center}
 $\|x_l\|\rightarrow \|x\|$ and $x_{li}\rightarrow x_i$ as $l\rightarrow\infty$ for each $i$, then $\|x_l-x\|\rightarrow 0$.
\end{center}
If for every $\varepsilon>0$ there exists a $\delta>0$ such that
\begin{center}
$(x_l)\subset B(X)$, $sep(x_l)\geq\varepsilon$, $\|x_l\|\rightarrow \|x\|$ and $x_{li}\rightarrow x_i$ for each $i$ implies $\|x\|\leq1-\delta$,
\end{center}where $sep(x_l)=\inf\{\|x_l-x_m\|: l\neq m\}$, then we say $X$ has the \emph{coordinatewise uniformly Kadec-Klee property} and is denoted by $X\in (UKK_c)$ \cite{ZHA}. For any Banach space $X$, $(UKK_c)\Rightarrow (H_c)$.\\
$X$ is said to have the \emph{uniform Opial property} ($(UOP)$, in short) if for each $\varepsilon>0$ there exists $\mu>0$ such that
\begin{center}
$1+ \mu\leq \displaystyle\liminf_{l\rightarrow\infty}\|x_l+x\|$
\end{center}
for any weakly null sequence $(x_l)$ in $S(X)$ and $x\in X$ with $\|x\|\geq\varepsilon$ (see \cite{CUI5}, \cite{PRUS}).\\
A Banach space $X$ has the property $(\beta)$ if and only if, for every $\epsilon>0$, there exists $\delta>0$ such that, for each element $x\in B(X)$ and each sequence $(x_l)\in B(X)$ with $sep(x_l)\geq \epsilon$, there is an index $k$ such that
\begin{center}
$\Big\|\frac{x+x_k}{2}\Big\|\leq 1-\delta$ \cite{KUTZ}.
\end{center}
The \emph{weakly uniform normal structure} of a Banach space $X$ ($WUNS(X)$, in short) is determined by the \emph{weakly convergent sequence coefficient} of $X$ ($WCS(X)$, in short) (\cite{BENA}, \cite{BYNU}) is defined as
\begin{center}
$WCS(X)=\inf \Big\{\frac{\displaystyle\lim_k\sup_{n, m\geq k}\|x_n-x_m\|}{\inf\{\displaystyle\limsup_n\|x_n-y\|: y\in \mbox{Conv}(~x_n)\}}\Big\}$,
\end{center}
where infimum is taken over all weakly convergent sequence $(x_n)$ which is not norm convergent. If $WCS(X)>1$ then Banach space $X$ has $WUNS$. A Banach space $X$ has $WUNS$ if it possesses $(UOP)$ \cite{LIN}.\\
Let $(X_n, \|.\|_n)$ be Banach spaces for each $n\in \mathbb{N}_0$. Then the Nakano sequence spaces $l_{\hat{p}}(X_n)$ is defined as
\begin{center}
$l_{\hat{p}}(X_n)=\Big\{x=(x_n)_{n=0}^{\infty}: x_n\in X_n \mbox{~for each~} n\in \mathbb{N}_0 \mbox{~and~} \rho(rx)<\infty \mbox{~for some~}r>0\Big\},$
\end{center}
where convex modular $\rho$ is defined as $\rho(x)=\displaystyle\sum_{n=0}^{\infty}\|x_n\|_n^{p_n}$. It is easy to show that the sequence space $l_{\hat{p}}(X_n)$ is a Banach space equipped with the Luxemberg norm
\begin{center}
$\|x\|= \inf \Big\{ r>0 :  \rho(\frac{x}{r})\leq 1 \Big\}$.
\end{center}
Saejung \cite{SAEJ} proved that Ces\`{a}ro sequence spaces $ces_p$ for $1<p<\infty$ are isometrically embedded in the infinite $l_p$-sum $l_p(\mathbb{R}^n)$ of finite dimensional spaces $\mathbb{R}^n$. Here we present similar result for the sequence space $\Lambda_{\hat{p}}$.
\begin{lem}{\label{lemiso}}
The sequence space $\Lambda_{\hat{p}}$ is isometrically embedded in the Nakano sequence spaces $l_{\hat{p}}(\mathbb{R}^{n+1})$, where $\mathbb{R}^{n+1}$ is the ${(n+1)}$-dimensional Euclidean space equipped with the following norm:\\
$\|(\alpha_0, \alpha_1, \ldots, \alpha_{n})\|=\displaystyle\sum_{i=0}^{n}|\alpha_i|$ for $(\alpha_0, \alpha_1, \ldots, \alpha_{n})\in \mathbb{R}^{n+1}$.
\end{lem}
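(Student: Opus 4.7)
The plan is to construct an explicit linear isometric embedding $T\colon\Lambda_{\hat{p}}\to l_{\hat{p}}(\mathbb{R}^{n+1})$ whose action on the modulars is an equality. Given $x=(x_k)\in\Lambda_{\hat{p}}$, the natural choice is to let the $n$-th coordinate of $T(x)$ be the $(n{+}1)$-tuple obtained by listing the weighted entries $\frac{\lambda_k-\lambda_{k-1}}{\lambda_n}\,x_k$ for $k=0,1,\ldots,n$, adopting the standing convention $\lambda_{-1}=0$; that is,
$$T(x)_n \;:=\; \Big(\tfrac{\lambda_0}{\lambda_n}\,x_0,\ \tfrac{\lambda_1-\lambda_0}{\lambda_n}\,x_1,\ \ldots,\ \tfrac{\lambda_n-\lambda_{n-1}}{\lambda_n}\,x_n\Big)\in\mathbb{R}^{n+1}.$$
Linearity of $T$ is transparent from the definition, and the codomain's $n$-th factor is the $(n{+}1)$-dimensional space equipped with its $\ell_1$-norm exactly as in the statement.

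The central computation is to evaluate that $\ell_1$-norm block by block. By the choice of coefficients,
$$\|T(x)_n\| \;=\; \sum_{k=0}^{n}\frac{\lambda_k-\lambda_{k-1}}{\lambda_n}\,|x_k| \;=\; \Lambda x(n).$$
Raising each side to the power $p_n$ and summing in $n$ yields $\rho(T(x))=\sigma(x)$, and the same identity applied to $T(x)/r$ and $x/r$ gives $\rho(T(x)/r)=\sigma(x/r)$ for every $r>0$. Taking the infimum over those $r$ for which the respective modulars do not exceed $1$ produces, on the one side, the Luxemburg norm $\|T(x)\|$ in $l_{\hat{p}}(\mathbb{R}^{n+1})$, and on the other side $\|x\|_{\hat{p}}$ in $\Lambda_{\hat{p}}$. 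Hence $\|T(x)\|=\|x\|_{\hat{p}}$, and injectivity is an automatic consequence of the isometry.

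There is no serious obstacle here: the whole argument reduces to recognising that the Cesàro-type average $\Lambda x(n)$ appearing in $\sigma$ is itself the $\ell_1$-norm of a suitable $(n{+}1)$-tuple, so one can simply repackage the terms of $x$ into blocks. The only care required is to list the coordinates in the right order and with the exact $\frac{1}{\lambda_n}$ normalisation, so that $\rho\circ T=\sigma$ holds term by term; once this identity is in hand, the coincidence of the Luxemburg norms, and therefore the isometric embedding claimed in the lemma, is immediate.
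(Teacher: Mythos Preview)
Your proposal is correct and follows essentially the same route as the paper: you define the identical block map $T$ sending $x$ to the sequence of weighted $(n{+}1)$-tuples $\bigl(\tfrac{\lambda_k-\lambda_{k-1}}{\lambda_n}x_k\bigr)_{k=0}^{n}$, observe that the $\ell_1$-norm of each block equals $\Lambda x(n)$, and conclude that the modulars (hence the Luxemburg norms) coincide. The only difference is cosmetic---you spell out the intermediate identity $\rho\circ T=\sigma$ a bit more explicitly than the paper does.
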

\begin{proof}
For all $x=(x_i)\in \Lambda_{\hat{p}}$, we define the following linear isometry $T: \Lambda_{\hat{p}}\rightarrow l_{\hat{p}}(\mathbb{R}^{n+1})$ by
\begin{center}
$T((x_i))=\Big(x_0, \Big(\frac{\lambda_0}{\lambda_1}x_0, \frac{(\lambda_1-\lambda_0)}{\lambda_1}x_1\Big), \ldots, \Big(\frac{\lambda_0}{\lambda_n}x_0, \frac{(\lambda_1-\lambda_0)}{\lambda_n}x_1, \ldots, \frac{(\lambda_n-\lambda_{n-1})}{\lambda_n}x_n\Big), \ldots\Big).$
\end{center}
Indeed
\begin{align*}
& \|T((x_i))\|_{l_{\hat{p}}(\mathbb{R}^{n+1})}\\
&=\|T(x_0, x_1, \ldots, x_i, \ldots)\|_{l_{\hat{p}}(\mathbb{R}^{n+1})}\\
&=\Big\|\Big(x_0, \Big(\frac{\lambda_0}{\lambda_1}x_0, \frac{(\lambda_1-\lambda_0)}{\lambda_1}x_1\Big), \ldots, \Big(\frac{\lambda_0}{\lambda_n}x_0, \frac{(\lambda_1-\lambda_0)}{\lambda_n}x_1, \ldots, \frac{(\lambda_n-\lambda_{n-1})}{\lambda_n}x_n\Big), \ldots\Big)\Big\|_{l_{\hat{p}}(\mathbb{R}^{n+1})}\\
&=\inf \Big\{ r>0 : \displaystyle\sum_{n=0}^{\infty}\Big(\frac{1}{r\lambda_n}\sum_{k=0}^{n}(\lambda_k-\lambda_{k-1})|x_k|\Big)^{p_n}\leq 1\Big\}\\
&=\inf \Big\{ r>
0 : \sigma\Big(\frac {x} {r}\Big) \leq 1 \Big\}\\
&=\|(x_i)\|_{\hat{p}}
\end{align*}Hence the lemma.
\end{proof}
Instead of studying geometric properties of $\Lambda_{\hat{p}}$ it is enough to study geometric properties of $l_{\hat{p}}(\mathbb{R}^{n+1})$ and if such geometric properties are inherited by subspaces then $\Lambda_{\hat{p}}$ will have the same property. Certain geometric structure of finite dimensional Banach spaces $l_p(X_n)$, $p>1$ is also investigated by Rolewicz \cite{ROLE}. \\
Saejung in his recent work (\cite{SAEJ}, Theorem 11 (2), p.535) established the following important result.
\begin{proposition}{\label{thmsaej}}
Suppose each $X_n$ is finite dimensional. Then the space $l_{\hat{p}}(X_n)$ has property $(\beta)$ and uniform Opial property if and only if $\displaystyle\limsup_{n\rightarrow \infty}p_n<\infty$.
\end{proposition}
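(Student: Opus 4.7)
The strategy is to argue the two directions separately, using only modular estimates and the fact that each coordinate space $X_n$ being finite dimensional forces weak convergence on norm-bounded subsets of $l_{\hat{p}}(X_n)$ to coincide with coordinate-wise convergence.

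For the necessity I would assume $\displaystyle\limsup_{n\to\infty} p_n = \infty$, extract $(n_k)$ with $p_{n_k}\to\infty$, and for each $k$ select a unit vector $u_k\in X_{n_k}$. Letting $E_k$ be the element of $l_{\hat{p}}(X_n)$ supported only on coordinate $n_k$ with value $u_k$, one has $\|E_k\|=1$ and $(E_k)$ is weakly null since the supports march off to infinity. Fixing a unit vector $v$ in coordinate $0$ and a small $t\in(0,1)$, the identity $\rho((E_k+tv)/r)=(t/r)^{p_0}+(1/r)^{p_{n_k}}$ together with $p_{n_k}\to\infty$ yields $\|E_k+tv\|\to 1$, which defeats any uniform UOP modulus since $\|tv\|=t>0$ is fixed. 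A parallel construction, using $(E_k)$ as an $\varepsilon$-separated sequence together with an $x$ supported on a fixed coordinate, should produce the corresponding failure of property $(\beta)$.

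For the sufficiency I would set $M:=\sup_n p_n<\infty$ and exploit the fact that, under this bound, $\|x\|=1$ is equivalent to $\rho(x)=1$ and the map $r\mapsto\rho(y/r)$ decays in $r$ at a rate depending only on $M$. Given $\varepsilon>0$, a weakly null $(x_l)\subset S(l_{\hat{p}}(X_n))$ and an $x$ with $\|x\|\geq\varepsilon$, the plan is to truncate $x$ to a finitely supported $\tilde{x}$ with $\rho(\tilde{x})\geq \eta(\varepsilon)>0$ and $\|x-\tilde{x}\|$ negligible. Coordinate-wise convergence then lets me split $x_l=y_l+z_l$ with $\|y_l\|\to 0$ and $\mathrm{supp}(z_l)\cap\mathrm{supp}(\tilde{x})=\emptyset$; disjoint-support additivity of the modular gives $\rho(z_l+\tilde{x})=\rho(z_l)+\rho(\tilde{x})\geq 1+\eta(\varepsilon)-o(1)$, which the norm-modular equivalence converts into $\liminf_l\|x_l+x\|\geq 1+\mu(\varepsilon)$, i.e.\ the UOP. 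Running the same argument on differences of an $\varepsilon$-separated sequence in place of the weakly null one will deliver property $(\beta)$ in the Kutzarova sense.

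The hardest step will be the quantitative conversion from a modular lower bound $\rho(y)\geq 1+\eta$ to a norm lower bound $\|y\|\geq 1+\mu(\eta,M)$. The bounded-exponent hypothesis is precisely what makes this conversion uniform in the coordinate, and conversely the necessity argument shows that, when the exponents blow up along a subsequence, the decay of $\rho(y/r)$ in $r$ can be made arbitrarily slow on those coordinates, so no such uniform modulus $\mu$ can exist and both geometric properties must fail.
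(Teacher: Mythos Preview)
The paper does not supply its own proof of this proposition. It is quoted as a result of Saejung (\cite{SAEJ}, Theorem~11(2)) and then used as a black box to deduce Theorem~3 via the isometric embedding of Lemma~\ref{lemiso}. There is therefore no in-paper argument to compare your proposal against.

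That said, your outline is the standard modular-space approach and is essentially what one finds in the literature. Two remarks. For necessity, your UOP-failure construction is correct and already suffices: the proposition asserts that the \emph{conjunction} ``property $(\beta)$ and UOP'' is equivalent to $\limsup_n p_n<\infty$, so exhibiting the failure of UOP alone when $\limsup_n p_n=\infty$ establishes the ``only if'' direction; the parallel $(\beta)$-failure sketch is unnecessary (and, as you wrote it, would need more care, since midpoints with a fixed finite-exponent coordinate do not obviously stay near the sphere). For sufficiency, the UOP argument is fine, but the phrase ``running the same argument on differences'' for property $(\beta)$ hides a genuine extra step: one must first use finite-dimensionality of each $X_n$ to extract a coordinatewise convergent subsequence of the $\varepsilon$-separated sequence $(x_l)$, and then show that each $x_l$ (for $l$ large) carries a fixed positive amount of modular mass beyond any prescribed initial block, which is what forces the midpoint modular uniformly below $1$. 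Under $\sup_n p_n<\infty$ this is routine, but it is a separate computation rather than a literal repetition of the UOP estimate.
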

Now we have the following new result.
\begin{theorem}
The sequence space $\Lambda_{\hat{p}}$ has property $(\beta)$ and uniform Opial property if and only if $\displaystyle\limsup_{n\rightarrow \infty}p_n<\infty$.
\end{theorem}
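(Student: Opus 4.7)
The natural plan is to bootstrap off Lemma \ref{lemiso} and Proposition \ref{thmsaej}: use the isometric embedding $T:\Lambda_{\hat{p}}\hookrightarrow l_{\hat{p}}(\mathbb{R}^{n+1})$ as a bridge, since each coordinate space $\mathbb{R}^{n+1}$ is finite dimensional, putting us exactly in the setting of Proposition \ref{thmsaej}. The sufficiency direction should be essentially free once we observe that both geometric properties are hereditary; the necessity direction is the substantive part and needs a direct construction inside $\Lambda_{\hat{p}}$.

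For the ``if'' direction, assume $\limsup_{n\to\infty}p_n<\infty$. Proposition \ref{thmsaej} applied with $X_n=\mathbb{R}^{n+1}$ gives that $l_{\hat{p}}(\mathbb{R}^{n+1})$ enjoys both property $(\beta)$ and the uniform Opial property. I would then invoke Lemma \ref{lemiso} to identify $\Lambda_{\hat{p}}$ with the closed subspace $T(\Lambda_{\hat{p}})$ of $l_{\hat{p}}(\mathbb{R}^{n+1})$, and remark that:
(a) property $(\beta)$ passes to every closed subspace (witnessing sequences and test points lying in the subspace are ipso facto admissible for the ambient space, so the same $\delta(\varepsilon)$ works);
(b) the uniform Opial property passes to closed subspaces because the restriction of the weak topology to $T(\Lambda_{\hat{p}})$ is the image of the weak topology of $\Lambda_{\hat{p}}$ under the isometric isomorphism $T$, and the defining inequality $1+\mu\le\liminf_{l}\|x_l+x\|$ is preserved by isometries.

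For the ``only if'' direction, I would argue the contrapositive. Assume $\limsup_{n\to\infty}p_n=\infty$ and pick a subsequence $(n_k)$ with $p_{n_k}\to\infty$. The plan is to produce a weakly null sequence $(z_k)\subset S(\Lambda_{\hat{p}})$ witnessing failure of the uniform Opial property. A natural candidate is a normalization $z_k=\alpha_k e_{n_k}$, where $\alpha_k>0$ is chosen so that $\|z_k\|_{\hat{p}}=1$; the sequence is weakly null because its coordinates converge pointwise to $0$ while remaining norm bounded, and $\Lambda_{\hat{p}}$ is reflexive (via Lemma \ref{lemiso} and Proposition \ref{thmsaej} for, e.g., bounded $p_n$ pieces — or via a standard Banach-Saks argument that is independent of boundedness of $p_n$). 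I would then fix a nonzero $x\in\Lambda_{\hat{p}}$ supported on finitely many coordinates and show, using modular computations, that $\liminf_k\|x+z_k\|_{\hat{p}}\to\|x\|_{\hat{p}}$; the mechanism is that for fixed $a\in(0,1)$ one has $a^{p_{n_k}}\to 0$, so the contribution of the $n_k$-th modular term degenerates, letting the $z_k$-``mass'' be absorbed without inflating the Luxemburg norm beyond the required $1+\mu$ threshold.

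The main obstacle will be the necessity direction, precisely because an isometric embedding transfers \emph{good} properties downward to subspaces but does not transfer \emph{failures} downward; so the clean reduction of the sufficiency direction does not reverse. The concrete difficulty is to carry out the modular asymptotics in $\Lambda_{\hat{p}}$ when the exponents $p_{n_k}$ blow up: one must control the interaction between the coefficients $(\lambda_k-\lambda_{k-1})/\lambda_n$ appearing in $\Lambda x(n)$ and the exponents $p_n$, and verify that the normalizing constants $\alpha_k$ do not escape to $0$ or $\infty$ along the chosen subsequence. If this direct construction becomes unwieldy, a fallback is to construct the failure sequence at the level of $l_{\hat{p}}(\mathbb{R}^{n+1})$ inside the image $T(\Lambda_{\hat{p}})$, for example using vectors of the form $T(\beta_k e_{n_k})$ whose $n_k$-block of $T$ has a tail behavior mimicking the classical Nakano obstruction used in Saejung's proof of Proposition \ref{thmsaej}.
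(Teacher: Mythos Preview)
Your ``if'' direction is exactly the paper's argument: apply Proposition~\ref{thmsaej} to the finite-dimensional blocks $X_n=\mathbb{R}^{n+1}$, then pull back along the isometric embedding of Lemma~\ref{lemiso} using that property~$(\beta)$ and the uniform Opial property are hereditary. That is the entire content of the paper's proof.

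Where you diverge is the ``only if'' direction. The paper gives no independent argument for necessity; its proof simply cites the biconditional in Proposition~\ref{thmsaej} and heredity, which---as you correctly point out---transfers only the positive direction to subspaces. In the paper this is not actually a gap, because Section~1.2 imposes as a \emph{standing hypothesis} that $\hat{p}=(p_n)$ is a bounded sequence; hence $\limsup_n p_n<\infty$ holds automatically and the ``only if'' direction is vacuous. So your contrapositive construction is work the paper neither does nor needs.

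If you intend your argument to stand without that standing hypothesis, then your sketch for necessity has a genuine loose end: you justify weak nullness of $(z_k)$ by appealing to reflexivity of $\Lambda_{\hat{p}}$, but reflexivity of Nakano-type spaces is itself tied to boundedness of the exponents, so invoking it when $\limsup_n p_n=\infty$ is circular. A cleaner route is to bypass reflexivity altogether and show coordinatewise convergence to $0$ suffices for weak convergence against the concrete dual (or, as in your fallback, imitate Saejung's obstruction directly inside $T(\Lambda_{\hat{p}})$). The modular asymptotics you outline (using $a^{p_{n_k}}\to 0$) are the right mechanism, but the normalization constants $\alpha_k$ and the tail sums $\sum_{n\ge n_k}(\cdot)^{p_n}$ need to be controlled explicitly before the argument is complete.
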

\begin{proof}
Since $\mathbb{R}^{n+1}$ is finite dimensional, Proposition \ref{thmsaej} implies that the space $l_{\hat{p}}(\mathbb{R}^{n+1})$ possesses the property $(\beta)$ and uniform Opial property if and only if $\displaystyle\limsup_{n\rightarrow \infty}p_n<\infty$. Since property $(\beta)$ and uniform Opial property are inherited by subspaces, so by Lemma \ref{lemiso} we obtain the result.
\end{proof}
\begin{corollary}
$(i)$ The space $\Lambda_{\hat{p}}$ has $WUNS$ if $\displaystyle\limsup_{n\rightarrow \infty}p_n<\infty$.\\
$(ii)$ If $\displaystyle\limsup_{n\rightarrow \infty}p_n<\infty$, and $\lambda_n=n+1$ then $ces_{\hat{p}}$ has property $(\beta)$ and uniform Opial property (\cite{SAEJ}, Theorem 11 (2)).
\end{corollary}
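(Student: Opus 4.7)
The plan is to deduce both statements directly from the preceding theorem together with facts already recorded in the paper; no new analysis is needed, so this will be a short forward-looking chain of implications rather than an independent argument.

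For part (i), I would first invoke the preceding theorem to obtain the uniform Opial property of $\Lambda_{\hat p}$ under the hypothesis $\limsup_{n\to\infty} p_n<\infty$. The introductory discussion records, citing \cite{LIN}, that any Banach space possessing (UOP) automatically has weakly uniform normal structure, i.e., that $WCS(X)>1$. Chaining these two facts yields $WUNS$ for $\Lambda_{\hat p}$ at once. The only check worth mentioning is that $\Lambda_{\hat p}$ is indeed a Banach space (recorded after its definition) so that the implication (UOP) $\Rightarrow$ $WUNS$ from \cite{LIN} applies without modification.

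For part (ii), I would appeal to Remark 1(i), which identifies $\Lambda_{\hat p}$ with the generalized Ces\`aro space $ces_{\hat p}$ when $\lambda_n=n+1$. This identification is isometric because under that substitution $\lambda_k-\lambda_{k-1}=1$ and $\lambda_n=n+1$, so the defining modular $\sigma$ of $\Lambda_{\hat p}$ collapses term-by-term to the modular of $ces_{\hat p}$ and hence the two Luxemberg norms agree. Substituting $\lambda_n=n+1$ into the theorem therefore immediately gives property $(\beta)$ and uniform Opial property for $ces_{\hat p}$ under $\limsup p_n<\infty$, recovering the result of \cite{SAEJ}.

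Because the proof is a direct chain of citations and a specialization, there is no real obstacle; the one subtlety worth flagging explicitly is that property $(\beta)$, $(UOP)$, and $WUNS$ are purely norm-theoretic and are preserved by isometric isomorphism, so the transfer through Lemma \ref{lemiso} (for the theorem) and through Remark 1(i) (for part (ii)) is justified without further comment.
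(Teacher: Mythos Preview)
Your proposal is correct and matches the paper's intent exactly: the corollary is stated without proof because part (i) follows immediately from the preceding theorem together with the implication (UOP) $\Rightarrow$ $WUNS$ recorded earlier (citing \cite{LIN}), and part (ii) is just the specialization $\lambda_n=n+1$ via Remark~1(i). There is nothing to add.
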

\begin{theorem}
The sequence space $\Lambda_{\hat{p}}$ possesses coordinate-wise uniform Kadec-Klee property.
\end{theorem}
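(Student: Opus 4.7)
The plan is to exploit the isometric embedding $T:\Lambda_{\hat p}\hookrightarrow l_{\hat p}(\mathbb{R}^{n+1})$ from Lemma \ref{lemiso} and reduce the claim to proving $(UKK_c)$ for the ambient Nakano space. Observe that the $n$-th $\mathbb{R}^{n+1}$-valued coordinate of $T(x)$ is a finite linear combination of $x_0,\ldots,x_n$ whose last entry is $\frac{\lambda_n-\lambda_{n-1}}{\lambda_n}x_n$, so coordinatewise convergence in $\Lambda_{\hat p}$ (real components) and coordinatewise convergence in the image $T(\Lambda_{\hat p})\subset l_{\hat p}(\mathbb{R}^{n+1})$ ($\mathbb{R}^{n+1}$-valued components) are mutually equivalent. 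Since $T$ is an isometry it preserves both the separation hypothesis and the condition $\|y^l\|\to\|y\|$, so once $l_{\hat p}(\mathbb{R}^{n+1})$ is shown to satisfy $(UKK_c)$ the property descends to $T(\Lambda_{\hat p})$ and hence to $\Lambda_{\hat p}$.

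For the main step, fix $\varepsilon>0$ and a sequence $(y^l)\subset B(l_{\hat p}(\mathbb{R}^{n+1}))$ with $\mathrm{sep}(y^l)\geq\varepsilon$, $y^l_n\to y_n$ in $\mathbb{R}^{n+1}$ for each $n$, and $\|y^l\|\to\|y\|$. Denote by $P_{\leq N},P_{>N}$ the natural truncation projections and by $\rho$ the Nakano modular; note that $p^+:=\sup_n p_n<\infty$, since $\hat p$ is bounded by assumption. For each fixed $N$, finite-dimensionality of the head forces $\|P_{\leq N}(y^l-y^m)\|\to 0$ as $l,m\to\infty$ and $\rho(P_{\leq N}(y^l))\to\rho(P_{\leq N}(y))$; combined with $\mathrm{sep}(y^l)\geq\varepsilon$ and the triangle inequality applied to $P_{>N}$, this forces $\|P_{>N}(y^l)\|\geq\varepsilon/4$ for infinitely many $l$. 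The elementary Luxemburg comparison $\rho(z)\geq\|z\|^{p^+}$, valid for $\|z\|\leq 1$ precisely because $p^+<\infty$, then yields $\rho(P_{>N}(y^l))\geq(\varepsilon/4)^{p^+}=:c(\varepsilon)$ for these $l$. Splitting $\rho(y^l)=\rho(P_{\leq N}(y^l))+\rho(P_{>N}(y^l))\leq 1$ and passing to $\liminf_l$ gives $\rho(P_{\leq N}(y))\leq 1-c(\varepsilon)$; monotone convergence in $N$ produces $\rho(y)\leq 1-c(\varepsilon)$, and the reverse comparison $\|y\|\leq\rho(y)^{1/p^+}\leq 1-c(\varepsilon)/p^+=:1-\delta(\varepsilon)$ closes the argument.

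The principal obstacle is keeping the modular-to-norm comparisons on the unit ball quantitatively tight so that the resulting $\delta(\varepsilon)>0$ is uniform in the sequence; these rest squarely on $p^+<\infty$ via the two Luxemburg inequalities $\rho(z)\geq\|z\|^{p^+}$ for $\|z\|\leq 1$ and $\|z\|\leq\rho(z)^{1/p^+}$ for $\rho(z)\leq 1$, together with Bernoulli's inequality $(1-c)^{1/p^+}\leq 1-c/p^+$ for $c\in(0,1)$. The remaining bookkeeping—the head convergence, the extraction of the tail mass from the triangle inequality, and the final monotone limit in $N$—are routine once each $\mathbb{R}^{n+1}$ is recognised as finite-dimensional.
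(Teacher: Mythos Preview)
Your argument is correct, but the route differs from the paper's. The paper works directly inside $\Lambda_{\hat p}$ by contradiction: assuming $\|x\|_{\hat p}>1-\delta$, it fixes a finite index set $I$ on which $\|x_l|_I\|_{\hat p}>1-\delta$, extracts from the separation hypothesis that $\|x_l|_{\mathbb{N}\setminus I}\|_{\hat p}\geq\varepsilon/4$, converts both to modular bounds via $\sigma(\cdot)\geq\|\cdot\|^{p^*}$, and then invokes the superadditivity of $t\mapsto t^{p_n}$ (proved in the text) to obtain $\sigma(x_l|_I)+\sigma(x_l|_{\mathbb{N}\setminus I})\leq\sigma(x_l)\leq 1$, a contradiction. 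You instead transport the problem through Lemma~\ref{lemiso} into $l_{\hat p}(\mathbb{R}^{n+1})$, where the Nakano modular decomposes \emph{exactly} as $\rho(P_{\leq N}\,\cdot)+\rho(P_{>N}\,\cdot)=\rho(\cdot)$, and then argue directly (no contradiction) to reach $\rho(y)\leq 1-c(\varepsilon)$ and hence $\|y\|\leq(1-c(\varepsilon))^{1/p^+}$. Your approach is tidier in that the exact modular splitting in the Nakano space obviates the superadditivity lemma, and along the way you in effect establish $(UKK_c)$ for any $l_{\hat p}(X_n)$ with finite-dimensional fibres and $p^+<\infty$; the paper's approach, on the other hand, is entirely self-contained in $\Lambda_{\hat p}$ and does not rely on the embedding or on interpreting coordinatewise convergence in the vector-valued sense. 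Both arguments rest on the same two Luxemburg comparisons and the same $\varepsilon/4$ tail-mass extraction, so the underlying mechanism is identical even though the packaging is different.
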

\begin{proof}
Let $\varepsilon\in (0, 1)$ and take $\eta=(\frac{\varepsilon}{4})^{p^{*}}$, where $p^{*}=\displaystyle\sup_n p_n$. We choose $\delta\in (0, 1)$ such that $(1-\delta)^{p^{*}}>1-\eta$. Suppose $(x_l)\subset B(\Lambda_{\hat{p}})$, $sep(x_l)\geq\epsilon$, $\|x_l\|_{\hat{p}}\rightarrow \|x\|_{\hat{p}}$, $x_{il}\rightarrow x_i$ as $l\rightarrow\infty$ and for all $i\in \mathbb{N}_0$. We show that there exists a $\delta>0$ such that $\|x\|_{\hat{p}}\leq 1-\delta$. In contradict, we suppose that $\|x\|_{\hat{p}}> 1-\delta$. Then we can select a finite set $I=\{1, 2, \ldots, N-1\}$ on which $\|x|_I\|_{\hat{p}}> 1-\delta$. Since $x_{il}\rightarrow x_i$ for each $i\in \mathbb{N}_0$, therefore $x_l\rightarrow x$ uniformly on $I$. Consequently, since $\|x_l\|_{\hat{p}}\rightarrow \|x\|_{\hat{p}}$, there exists $l_N\in \mathbb{N}$ such that
\begin{center}
$\|x_l|_I\|_{\hat{p}}> 1-\delta$ and $\|(x_l-x_m)|_{I}\|_{\hat{p}}\leq \frac{\epsilon}{2}$ for all $l, m\geq l_N$.
\end{center}
From the first inequality, we simply get $\sigma(x_l|_I)\geq \|x_l|_I\|_{\hat{p}}^{p^{*}}> (1-\delta)^{p^{*}}> 1-\eta$ for $l\geq l_N$. Since $sep(x_l)\geq\epsilon$, i.e., $\|x_l-x_m\|_{\hat{p}}\geq\epsilon$, so the second inequality implies that $\|(x_l-x_m)|_{\mathbb{N}-I}\|_{\hat{p}}\geq \frac{\epsilon}{2}$ for $l, m\geq l_N, l\neq m$. Hence for $N\in \mathbb{N}$ there exists a $l_N$ such that $\|x_{l_N}|_{\mathbb{N}-I}\|_{\hat{p}}\geq \frac{\epsilon}{4}$. Without loss of generality, we may assume that $\|x_l|_{\mathbb{N}-I}\|_{\hat{p}}\geq \frac{\epsilon}{4}$ for all $l, N\in \mathbb{N}$. Therefore from the relation between norm and modular, we have $\sigma(x_l|_{\mathbb{N}-I})\geq \|x_l|_{\mathbb{N}-I}\|_{\hat{p}}^{p^{*}}\geq (\frac{\varepsilon}{4})^{p^{*}}=\eta$.\\
The convexity of the function $f(t)=|t|^{p_n}$ for each $n\in \mathbb{N}_0$, we have for any $\gamma\in [0,1]$ and $u\in \mathbb{R}$ that $f(\gamma u)=f(\gamma u+ (1-\gamma)0)\leq\gamma f(u)$. Therefore, if $0\leq u<v<\infty$, then $f(u)=f(\frac{u}{v}v)\leq \frac{u}{v}f(v)$, which means that $\frac{f(u)}{u}\leq \frac{f(v)}{v}$ for each $n\in \mathbb{N}_0$. Assuming now that $0\leq u,v<\infty$, $u+v>0$, we get
\begin{center}
$f(u+v)= u \frac{f(u+v)}{u+v} + v \frac{f(u+v)}{u+v}\geq u \frac{f(u)}{u}+ v \frac{f(v)}{v}=f(u)+ f(v)$
\end{center} for each $n\in \mathbb{N}_0$.
Since $x_l=x_l|_{I}+x_l|_{\mathbb{N}-I}$, applying the above fact we get $\sigma(x_l|_{I})+\sigma(x_l|_{\mathbb{N}-I})\leq\sigma(x_l)\leq 1$. It implies that $\sigma(x_l|_{\mathbb{N}-I})\leq 1-\sigma(x_l|_{I})< 1-(1-\eta)=\eta$, i.e., $\sigma(x_l|_{\mathbb{N}-I})<\eta$, which contradicts the inequality $\sigma_\Phi(x_l|_{\mathbb{N}-I})\geq \eta$ and this contradiction completes the proof.
\end{proof}

\begin{theorem}
Let $\displaystyle\limsup_{n\rightarrow \infty} p_n< \infty$. Then a point $x\in S(\Lambda_{\hat{p}})$ is an extreme point of $B(\Lambda_{\hat{p}})$ if and only if\\
$(i)$ $\sigma(x)=1$ and\\
$(ii)$ $Card(A_x)\leq 1$, \\
where $A_x=\{n\in Supp~ x: ~\Lambda x(n), ~\Lambda x(n+1), \ldots, ~\Lambda x(m-1)$ \mbox{~are different from zero and belongs}  \mbox{~to the interior of affine interval of~} $f(t)=|t|^{p_n}$ \mbox{for each} $n\in \mathbb{N}_0$, \mbox{~where~} $m$ \mbox{~is the smallest number~}\\ \mbox{~ such that~} $m>n$ \mbox{~and~} $m\in Supp~ x. \}$
\end{theorem}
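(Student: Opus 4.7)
My plan is to prove the equivalence by treating necessity and sufficiency separately, using the hypothesis $\limsup p_n < \infty$ to pass freely between the Luxemberg norm and the modular $\sigma$.

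For necessity, set $p^{\ast} = \sup_n p_n < \infty$. The $\Delta_2$-type estimate $\sigma(2x) \leq 2^{p^{\ast}}\sigma(x)$ makes $r \mapsto \sigma(rx)$ continuous on $[0,\infty)$, whence $\|x\|_{\hat{p}} = 1$ forces $\sigma(x) = 1$: otherwise $\sigma(x) < 1$ would yield some $c > 1$ with $\sigma(cx) \leq 1$, i.e.\ $\|x\|_{\hat{p}} \leq 1/c < 1$, a contradiction. This gives (i). For (ii), if $A_x$ contained distinct indices $n_1 < n_2$, I would build a nonzero perturbation $h$ supported on coordinates between $n_1$ and $n_2$, with coordinates tuned against the weights $(\lambda_k - \lambda_{k-1})$ so that the telescoping sums $\sum_{k \leq n}(\lambda_k-\lambda_{k-1})|x_k \pm h_k|$ agree with $\sum_{k \leq n}(\lambda_k-\lambda_{k-1})|x_k|$ for every $n \geq n_2$, while for $n_1 \leq n < n_2$ the perturbed averages remain inside the prescribed affine segment. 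Then $\sigma(x \pm h) = \sigma(x) = 1$ and $x = \tfrac{1}{2}\bigl((x+h) + (x-h)\bigr)$ with $x \pm h \in S(\Lambda_{\hat{p}})$ distinct, contradicting extremality.

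For sufficiency, assume $\sigma(x) = 1$, $\operatorname{Card}(A_x) \leq 1$, and $x = \tfrac{1}{2}(y + z)$ with $y, z \in B(\Lambda_{\hat{p}})$. Combining the pointwise bound $|x_k| \leq \tfrac{1}{2}(|y_k| + |z_k|)$ with convexity of $t \mapsto t^{p_n}$ on $[0, \infty)$ produces
$$1 = \sigma(x) \leq \sigma\!\left(\tfrac{|y| + |z|}{2}\right) \leq \tfrac{1}{2}\bigl(\sigma(y) + \sigma(z)\bigr) \leq 1,$$
forcing equalities throughout. The first equality forces $y_k$ and $z_k$ to share the same sign for each $k$; the second, together with strict convexity of $t \mapsto t^{p_n}$ off any affine interval, forces $\Lambda y(n) = \Lambda z(n)$ for all $n \notin A_x$. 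Since $\operatorname{Card}(A_x) \leq 1$, consecutive indices outside $A_x$ are abundant, and taking differences in $\lambda_n\,\Lambda y(n) = \sum_{k \leq n}(\lambda_k - \lambda_{k-1})|y_k|$ (and likewise for $z$) recursively yields $|y_k| = |z_k|$ for every $k$; combined with the same-sign property this forces $y = z = x$.

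The main obstacle I expect is the explicit finite-window construction of the perturbation $h$ in the necessity of (ii): one must coordinate the $(\lambda_k - \lambda_{k-1})$-weights so that the telescoping cancellation kicks in exactly from $n_2$ onward, while simultaneously keeping each perturbed average $\Lambda(x \pm h)(n)$ inside the open affine segment at index $n$ for $n_1 \leq n < n_2$. A secondary delicate point is verifying, in the sufficiency direction, that even with one exceptional index in $A_x$, the recursive recovery $|y_k| = |z_k|$ remains valid because infinitely many consecutive indices outside $A_x$ bracket that exceptional index and anchor the recursion.
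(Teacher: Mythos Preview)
Your handling of condition (i) is genuinely different from the paper's and in fact simpler: you observe that under $\limsup_n p_n<\infty$ the modular satisfies a $\Delta_2$-type bound, so $\|x\|_{\hat p}=1$ already forces $\sigma(x)=1$ for \emph{every} $x\in S(\Lambda_{\hat p})$, with no appeal to extremality. The paper instead assumes $\sigma(x)<1$ and manufactures an explicit decomposition $y=(x_0,\dots,x_{n_0},0,0,\dots)$, $z=(x_0,\dots,x_{n_0},2x_{n_0+1},2x_{n_0+2},\dots)$ with $\sigma(y),\sigma(z)\le 1$, contradicting extremality. Your route is cleaner and, incidentally, shows that (i) is not really an extremality condition at all in this setting.

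For the necessity of (ii), your sketch has a real gap. Requiring only that the perturbed averages stay inside the affine segment for $n_1\le n<n_2$ and that the partial sums agree for $n\ge n_2$ yields at best $\sigma(x+h)+\sigma(x-h)=2\sigma(x)$, since on each affine piece $f(t)=a_nt+b_n$ one gets $f(\Lambda(x+h)(n))+f(\Lambda(x-h)(n))=2f(\Lambda x(n))$; it does \emph{not} give $\sigma(x+h)=\sigma(x-h)=1$ separately. The paper's construction addresses exactly this: it perturbs at \emph{four} coordinates $n_1,m_1,n_2,m_2$ (where $m_i$ is the next element of $\operatorname{Supp}x$ after $n_i$), with sizes $\delta_1,\delta_2$ chosen so that $\varepsilon_1\delta_1=\varepsilon_2\delta_2$, where $\varepsilon_i=\sum_{j=n_i}^{m_i-1}a_j/\lambda_j$. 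This balance makes the linear contributions from the two affine blocks cancel, giving $\sigma(y)=\sigma(z)=\sigma(x)$ individually. Note also that the support must reach out to $m_2>n_2$, not merely lie ``between $n_1$ and $n_2$''.

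Your sufficiency argument via the equality chain $1=\sigma(x)\le\sigma(\tfrac{|y|+|z|}{2})\le\tfrac12(\sigma(y)+\sigma(z))\le1$ is the same starting point as the paper's. Be careful, though: what the second equality literally gives is $\Lambda y(n)=\Lambda z(n)$ whenever $\Lambda x(n)$ is not in the interior of an affine interval of $t\mapsto t^{p_n}$; this is not the same as ``$n\notin A_x$'', since membership in $A_x$ is a condition on a whole block $[n,m-1]$ of averages, not on the single index $n$. The paper closes this by a finer argument: it locates the first coordinate $n_1$ with $y_{n_1}\ne z_{n_1}$, shows $n_1\in\operatorname{Supp}x$, deduces $n_1\in A_x$ from the termwise equality on the block $[n_1,m_1-1]$, and then uses $\sigma(y)=\sigma(z)$ to produce a second index $n_2\in A_x$, contradicting $\operatorname{Card}(A_x)\le1$. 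Your recursive recovery of $|y_k|=|z_k|$ would need this blockwise reasoning made explicit to go through when $A_x$ is allowed to be nonempty.
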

\begin{proof}
We first prove the necessary part of the theorem. Let $x\in S(\Lambda_{\hat{p}})$ be an extreme point and condition $(i)$ is not true. Put $\varepsilon=1-\sigma(x)>0$ and consider the following two sequences:\\
$$y=(y_k)=(x_0, x_1, \ldots, x_{n_0}, 0, 0, \ldots)$$
$$z=(z_k)=(x_0, x_1, \ldots, x_{n_0}, 2x_{n_0+1}, 2x_{n_0+2}, \ldots).$$
Then it is clear that $2x=y+z$ and $y\neq z$. But
\begin{center}
$\sigma(y)=\displaystyle\sum_{n=0}^{n_0}\Big(\frac{1}{\lambda_n}\sum_{k=0}^{n}(\lambda_k-\lambda_{k-1})|x_k|\Big)^{p_n}\leq \sigma(x)=1-\varepsilon<1,$ \mbox{~and~}
\end{center}
\begin{align}{\label{eqnsigmaz}}
\sigma(z)&\leq\displaystyle\sum_{n=0}^{n_0}\Big(\frac{1}{\lambda_n}\sum_{k=0}^{n}(\lambda_k-\lambda_{k-1})|x_k|\Big)^{p_n}+ \displaystyle\sum_{n=n_0+1}^{\infty}2^{p_n}\Big(\frac{1}{\lambda_n}\sum_{k=0}^{n}(\lambda_k-\lambda_{k-1})|x_k|\Big)^{p_n}
\end{align}
Since $\displaystyle\limsup_{n\rightarrow \infty} p_n< \infty$ and $x\in \Lambda_{\hat{p}}$, so $\exists$ a natural number $n_0$ and a constant $M>0$ such that for all $n>n_0$, we have $2^{p_n}\leq 2^M$ and for every $\varepsilon>0$,  $\displaystyle\sum_{n=n_0+1}^{\infty}\Big(\frac{1}{\lambda_n}\sum_{k=0}^{n}(\lambda_k-\lambda_{k-1})|x_k|\Big)^{p_n}<\frac{\varepsilon}{2^M}$.
Hence from Eqn. $(\ref{eqnsigmaz})$, we obtain $\sigma(z)<\sigma(x)+\varepsilon=1$. The relation between norm and modular implies that $\|y\|_{\hat{p}}\leq 1$ and $\|z\|_{\hat{p}}\leq 1 $, which contradicts to the assumption that $x$ is an extreme point. Therefore $\sigma(x)=1$, i.e., condition $(i)$ is proved.\\
To prove condition $(ii)$, if possible, we assume that $A_x$ has at least two elements and show that $x$ is not an extreme point. Let $n_1$, $n_2\in A_x$ and $n_1\neq n_2$. We denote by $m_1$ (respectively $m_2$) the smallest number in $Supp~x$ such that $m_1>n_1$ ( respectively $m_2>n_2$) and assume that $m_1\leq n_2$. Therefore by assumption, we have $\Lambda x(n_1), ~\Lambda x(n_1+1), \ldots, ~\Lambda x(m_1-1)$ and $\Lambda x(n_2), ~\Lambda x(n_2+1), \ldots, ~\Lambda x(m_2-1)$ are different from zero and belongs to interior of the affine intervals of $f(t)=|t|^{p_n}$, $n\in \mathbb{N}_0$. Recall that $\Lambda x(n)=\frac{1}{\lambda_n}\displaystyle\sum_{k=0}^{n}(\lambda_k-\lambda_{k-1})|x_k|$. The affine function defined on these intervals by the formula $f(t)=a_jt+b_j$, where $j=n_1, n_1+1, \ldots, m_1-1, n_2, \ldots, m_2-1$. Denote
\begin{align*}
\varepsilon_1&= \frac{a_{n_1}}{\lambda_{n_1}}+ \frac{a_{n_1+1}}{\lambda_{n_1+1}}+\ldots+\frac{a_{m_1-1}}{\lambda_{m_1-1}} & \mbox{~and~}
\varepsilon_2&= \frac{a_{n_2}}{\lambda_{n_2}}+ \frac{a_{n_2+1}}{\lambda_{n_2+1}}+\ldots+\frac{a_{m_2-1}}{\lambda_{m_2-1}}.
\end{align*}
Choose $\delta_1\neq 0$ and $\delta_2\neq 0$ such that
$\varepsilon_1\delta_1=\varepsilon_2\delta_2$ and
\begin{center}
$\Lambda x(i)\pm \frac{\delta_1}{i}$, $i=n_1, n_1+1, \ldots, m_1-1$ and $\Lambda x(j)\pm \frac{\delta_2}{j}$, $j=n_2, \ldots, m_2-1$
\end{center} are all belongs to affine intervals of $f$.\\
Consider two sequences $y=(y_n)$ and $z=(z_n)$ defined as follows when $m_1<n_2$:
\begin{align*}
&y=(x_0, \ldots, x_{n_1-1}, x_{n_1}+\frac{sgn~x_{n_1}}{\lambda_{n_1}-\lambda_{n_1-1}}\delta_{1}, x_{n_1+1}, \ldots, x_{m_1-1}, x_{m_1}-\frac{sgn~x_{m_1}}{\lambda_{m_1}-\lambda_{m_1-1}}\delta_{1}, x_{m_1+1}, \\
&~~~\ldots, ~x_{n_2-1}, x_{n_2}- \frac{sgn~x_{n_2}}{\lambda_{n_2}-\lambda_{n_2-1}}\delta_{2}, x_{n_2+1}, \ldots, x_{m_2-1}, x_{m_2}+\frac{sgn~x_{m_2}}{\lambda_{m_2}-\lambda_{m_2-1}}\delta_{2}, x_{m_2+1}, \ldots)\\
&z=(x_0, \ldots, x_{n_1-1}, x_{n_1}-\frac{sgn~x_{n_1}}{\lambda_{n_1}-\lambda_{n_1-1}}\delta_{1}, x_{n_1+1}, \ldots, x_{m_1-1}, x_{m_1}+\frac{sgn~x_{m_1}}{\lambda_{m_1}-\lambda_{m_1-1}}\delta_{1}, x_{m_1+1},\\
&~~~\ldots, ~x_{n_2-1}, x_{n_2}+\frac{sgn~x_{n_2}}{\lambda_{n_2}-\lambda_{n_2-1}}\delta_{2}, x_{n_2+1}, \ldots, x_{m_2-1}, x_{m_2}-\frac{sgn~x_{m_2}}{\lambda_{m_2}-\lambda_{m_2-1}}\delta_{2}, x_{m_2+1}, \ldots),
\end{align*}where $sgn$ denotes the signum function.
The case when $m_1=n_2$, we define $y$ and $z$ similarly, where
\begin{align*}
y_{m_1}&= x_{m_1}-\frac{sgn~x_{m_1}}{\lambda_{m_1}-\lambda_{m_1-1}}(\delta_1+\delta_2)
& \mbox{~and~~}
z_{m_1}&= x_{m_1}+\frac{sgn~x_{m_1}}{\lambda_{m_1}-\lambda_{m_1-1}}(\delta_1+\delta_2).
\end{align*}
For the above two sequences $y$ and $z$, we have $y+z=2x$ and \\
\begin{align*}
\displaystyle\sum_{n=n_1}^{m_1-1}(\Lambda y(n))^{p_n}&= \displaystyle\sum_{n=n_1}^{m_1-1}(\Lambda x(n))^{p_n}+\varepsilon_1\delta_1,~~~\displaystyle\sum_{n=n_2}^{m_2-1}(\Lambda y(n))^{p_n}= \displaystyle\sum_{n=n_2}^{m_2-1}(\Lambda x(n))^{p_n}-\varepsilon_2\delta_2
\end{align*}
\begin{align*}
\displaystyle\sum_{n=n_1}^{m_1-1}(\Lambda z(n))^{p_n}&= \displaystyle\sum_{n=n_1}^{m_1-1}(\Lambda x(n))^{p_n}-\varepsilon_1\delta_1,~~~\displaystyle\sum_{n=n_2}^{m_2-1}(\Lambda z(n))^{p_n}= \displaystyle\sum_{n=n_2}^{m_2-1}(\Lambda x(n))^{p_n}+\varepsilon_2\delta_2.
\end{align*}
Hence $\sigma(y)=\sigma(z)=\sigma(x)=1$ and relation between norm and modular implies that $\|x\|=1$, $\|y\|=1$ and $\|z\|=1$. Therefore $x$ is not an extreme point and our assumption that $A_x$ has atleast two elements is wrong. So $Card(A_x)\leq 1$, i.e, condition $(ii)$ is proved.\\
Now we proof sufficiency of the theorem, that is if $x\in S(\Lambda_{\hat{p}})$ then $x$ is an extreme point of $B(\Lambda_{\hat{p}})$. Let the assumptions $(i)-(ii)$ holds. We assume that there exists $y$, $z$ $\in S(\Lambda_{\hat{p}})$ such that $x=\frac{y+z}{2}$ but $y\neq z$. Let $n_1\in \mathbb{N}$ be the smallest number such that $y_{n_1}\neq z_{n_1}$. Then we show that $n_1\in Supp x$. Suppose in contradict, we assume that $n_1\notin Supp x$, that is $x_{n_1}=0$. Then we immediately have $|y_{n_1}|=|z_{n_1}|>0$. Let $l$ be the smallest number in $Supp x$ such that $n_1<l$. With out loss of generality, we assume that $|x_{l}|\leq |y_{l}|$ and we have $0=|x_{n}|\leq |y_{n}|$ for $n={n_1+1}, \ldots, l-1$. Now
\begin{align*}
&\Lambda{y(l)}\\
&= \frac{1}{\lambda_l}(\lambda_0|y_{0}|+\ldots+(\lambda_{n_1}-\lambda_{n_1-1})|y_{n_1}|+
(\lambda_{n_1+1}-\lambda_{n_1})|y_{n_1+1}|+\ldots+(\lambda_{l}-\lambda_{l-1})|y_{l}|)\\
&= \frac{1}{\lambda_l}(\lambda_0|x_{0}|+\ldots+(\lambda_{n_1}-\lambda_{n_1-1})|y_{n_1}|+
(\lambda_{n_1+1}-\lambda_{n_1})|y_{n_1+1}|+\ldots+(\lambda_{l}-\lambda_{l-1})|y_{l}|)\\
& \geq \frac{1}{\lambda_l}(\lambda_0|x_{0}|+\ldots+(\lambda_{n_1}-\lambda_{n_1-1})|y_{n_1}|+
(\lambda_{n_1+1}-\lambda_{n_1})|x_{n_1+1}|+\ldots+(\lambda_{l}-\lambda_{l-1})|x_{l}|)\\
&> \frac{1}{\lambda_l}(\lambda_0|x_{0}|+\ldots+(\lambda_{n_1}-\lambda_{n_1-1})|x_{n_1}|+
(\lambda_{n_1+1}-\lambda_{n_1})|x_{n_1+1}|+\ldots+(\lambda_{l}-\lambda_{l-1})|x_{l}|)\\
&=\Lambda{x(l)},
\end{align*}
which implies that $\sigma(y)>\sigma(x)=1$, a contradiction with the assumption that $y\in S(\Lambda_{\hat{p}})$. Therefore $n_1\in Supp x$.
Since
\begin{align*}
1&=\sigma(x)=\sigma\Big(\frac{y+z}{2}\Big)=\displaystyle\sum_{n=0}^{\infty}\Big(\frac{1}{\lambda_n}\sum_{k=0}^{n}(\lambda_k-\lambda_{k-1})\Big|\frac{y_k+z_k}{2}\Big|
\Big)^{p_n}\\
& \leq \displaystyle\sum_{n=0}^{\infty}\Big(\frac{1}{2\lambda_n}\sum_{k=0}^{n}(\lambda_k-\lambda_{k-1})|y_k|+
\frac{1}{2\lambda_n}\sum_{k=0}^{n}(\lambda_k-\lambda_{k-1})|z_k|\Big)^{p_n}\\
& \leq \frac{1}{2}\displaystyle\sum_{n=0}^{\infty}\Big(\frac{1}{\lambda_n}\sum_{k=0}^{n}(\lambda_k-\lambda_{k-1})|y_k|\Big)^{p_n}+ \frac{1}{2}\displaystyle\sum_{n=0}^{\infty}\Big(\frac{1}{\lambda_n}\sum_{k=0}^{n}(\lambda_k-\lambda_{k-1})|z_k|\Big)^{p_n}\\
& = \frac{1}{2}\big(\sigma(y)+\sigma(z)\big)=1.
\end{align*}
Therefore for each $n\in \mathbb{N}_0$, we have
\begin{align}{\label{eqnequal}}
&\displaystyle\sum_{n=0}^{\infty}\Big(\frac{1}{\lambda_n}\sum_{k=0}^{n}(\lambda_k-\lambda_{k-1})\Big|\frac{y_k+z_k}{2}\Big|
\Big)^{p_n} \nonumber
\end{align}
\begin{align}
&=\frac{1}{2}\displaystyle\sum_{n=0}^{\infty}\Big(\frac{1}{\lambda_n}\sum_{k=0}^{n}(\lambda_k-\lambda_{k-1})|y_k|\Big)^{p_n}+ \frac{1}{2}\displaystyle\sum_{n=0}^{\infty}\Big(\frac{1}{\lambda_n}\sum_{k=0}^{n}(\lambda_k-\lambda_{k-1})|z_k|\Big)^{p_n}
\end{align}
With out loss of generality, we assume that $0\leq |y_{n_1}|<|x_{n_1}|<|z_{n_1}|$. Using the strict convexity of the function $f(t)=|t|^{p_n}$, $n\in \mathbb{N}_0$, we have
\begin{align}{\label{eqninequal}}
(\Lambda y(n_1))^{p_{n_1}}<(\Lambda z(n_1))^{p_{n_1}}.
\end{align}
Let $m_1\in \mathbb{N}_0$ be the smallest number such that $m_1>n_1$ and $m_1\in \mbox{Supp}~ x$. If $x_n=0$ for every $n>n_1$ then by Eqn (\ref{eqninequal}), we arrived at a contradiction $\sigma(y)<\sigma(z)$. For $n=n_1, n_1+1, \ldots, m_1-1$, we have $\Lambda y(n)<\Lambda z(n)$. If there exists $n\in\{n_1, n_1+1, \ldots, m_1-1\}$ such that
\begin{center}
$(\Lambda x(n))^{p_n}< \frac{1}{2}\Big((\Lambda y(n))^{p_n}+(\Lambda z(n))^{p_n}\Big)$
\end{center}
then we get a contradiction with Eqn (\ref{eqnequal}). Therefore for every $n\in\{n_1, n_1+1, \ldots, m_1-1\}$ we assume that
\begin{center}
$(\Lambda x(n))^{p_n}= \frac{1}{2}\Big((\Lambda y(n))^{p_n}+(\Lambda z(n))^{p_n}\Big).$
\end{center}
Then by assumption, we have $n_1\in A_x$. Since $\sigma(y)=1$, $\sigma(z)=1$, so by Eqn (\ref{eqninequal}), there exists $n\geq m_1$ such that $\Lambda y(n)> \Lambda z(n)$. Let $n_2$ be the such a smallest number by which $\Lambda z(n_2)< \Lambda y(n_2)$. Then $|z_{n_2}|<|y_{n_2}|$ and so $n_2\in \mbox{Supp}~x$. If $x_n=0$ for every $n>m_2$ then we get a contradiction with $\sigma(z)<\sigma(y)$.\\
Let $m_2$ be the smallest number in $\mbox{Supp}~x$ such that $m_2>n_2$. By assumption $n_2\notin A_x$, so there exists $n\in  \{n_2, n_2+1, \ldots, m_2-1\}$ such that
\begin{center}
$(\Lambda x(n))^{p_n}< \frac{1}{2}\Big((\Lambda y(n))^{p_n}+(\Lambda z(n))^{p_n}\Big),$
\end{center}
which contradicts the Eqn (\ref{eqnequal}). Thus we must have $x=y=z$, i.e., $x$ is an extreme point.
\end{proof}

\section{Von Neumann-Jordan constant of $\Lambda_2^{(2)}$}
For two dimensional sequence space $\Lambda_p^{(2)}$, norm $\|(u, v)\|_p$ is given by
$$\|(u, v)\|_p=\Big(|u|^p+\Big(\frac{\lambda_0|u|+(\lambda_1-\lambda_0)|v|}{\lambda_1}\Big)^p\Big)^{\frac{1}{p}}.$$
The \emph{von Neumann-Jordan constant} $C_{NJ}(X)$ of a Banach space $X$ was introduced by Clarkson \cite{CLARK} and is defined as follows:
\begin{center}
$C_{NJ}(X)=\sup\Big\{\frac{\|x+ y\|^2+ \|x-y\|^2}{2(\|x\|^2+ \|y\|^2)}: x, y\in X \mbox{~and~}\|x\|+ \|y\|\neq 0\Big\}.$
\end{center}
Now we begin with the following result.
\begin{theorem}
The von Neumann-Jordan constant $C_{NJ}(\Lambda_2^{(2)})=1+\frac{\lambda_0}{\sqrt{\lambda_0^2+\lambda_1^2}}$.
\end{theorem}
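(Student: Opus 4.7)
Set $\alpha = \lambda_0/\lambda_1$ and $\beta = (\lambda_1-\lambda_0)/\lambda_1 = 1-\alpha$, so that the norm on $\Lambda_2^{(2)}$ satisfies $\|(u,v)\|_2^2 = u^2 + (\alpha|u|+\beta|v|)^2$, and the target value is $C := 1 + \alpha/\sqrt{1+\alpha^2} = 1 + \lambda_0/\sqrt{\lambda_0^2+\lambda_1^2}$. The plan is to prove $C_{NJ}(\Lambda_2^{(2)}) \geq C$ by exhibiting an explicit extremal pair, and then $C_{NJ}(\Lambda_2^{(2)}) \leq C$ by direct expansion and case analysis on signs.

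For the lower bound, I would test the family $x_0 = (1,0)$, $y_0 = (0,t)$ with $t>0$. Since the norm is even in each coordinate, $\|x_0+y_0\| = \|x_0-y_0\|$, and a direct computation gives
\begin{equation*}
\frac{\|x_0+y_0\|^2+\|x_0-y_0\|^2}{2(\|x_0\|^2+\|y_0\|^2)} = 1 + \frac{2\alpha\beta t}{(1+\alpha^2)+\beta^2 t^2}.
\end{equation*}
Applying AM-GM to the denominator, with equality at $t^{*} = \sqrt{1+\alpha^2}/\beta = \sqrt{\lambda_0^2+\lambda_1^2}/(\lambda_1-\lambda_0)$, gives a supremum of exactly $C$ over this family.

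For the upper bound, given arbitrary $x = (u_1,v_1)$ and $y = (u_2,v_2)$, I would set $a = |u_1|$, $b = |u_2|$, $c = |v_1|$, $d = |v_2|$ and assume $a \geq b$ after swapping $x,y$ if necessary. Expanding the two quantities of interest yields
\begin{equation*}
\|x+y\|^2 + \|x-y\|^2 = 2(1+\alpha^2)(a^2+b^2) + 2\beta^2(c^2+d^2) + 2\alpha\beta S,
\end{equation*}
\begin{equation*}
2(\|x\|^2 + \|y\|^2) = 2(1+\alpha^2)(a^2+b^2) + 2\beta^2(c^2+d^2) + 4\alpha\beta T,
\end{equation*}
where $S = |u_1+u_2||v_1+v_2| + |u_1-u_2||v_1-v_2|$ and $T = ac+bd$, so the ratio minus $1$ equals $\alpha\beta(S-2T)/(\|x\|^2+\|y\|^2)$. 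A short case analysis on the signs of $u_1u_2$ and $v_1v_2$ (two effective sign configurations after using $x\mapsto -x$ and $y\mapsto -y$) together with the comparison of $c$ and $d$ shows that either $S \leq 2T$ (in which case the ratio is $\leq 1 < C$), or else $a>b$ and $d>c$ and the uniform bound $S - 2T \leq 2(a-b)(d-c)$ holds; in the opposite-sign configuration this comes from the identity $S-2T = 2(a-b)(d-c) - 4bc$.

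The final step is the two-step chain
\begin{equation*}
2\beta(a-b)(d-c)\sqrt{1+\alpha^2} \leq (a-b)^2(1+\alpha^2) + \beta^2(d-c)^2 \leq (a^2+b^2)(1+\alpha^2) + \beta^2(c^2+d^2) \leq \|x\|^2+\|y\|^2,
\end{equation*}
in which the first inequality is AM-GM, the second uses $ab, cd \geq 0$, and the third uses $2\alpha\beta T \geq 0$. Multiplying through by $\alpha/\sqrt{1+\alpha^2}$ gives $\alpha\beta\cdot 2(a-b)(d-c) \leq (\alpha/\sqrt{1+\alpha^2})(\|x\|^2+\|y\|^2)$, whence $C_{NJ}(\Lambda_2^{(2)}) \leq C$. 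The main obstacle I expect is the sign-and-ordering bookkeeping needed to establish the uniform bound $S - 2T \leq 2(a-b)(d-c)$ across both surviving sign cases; once that is in place, the AM-GM chain is clean, and simultaneous equality in its first two steps forces $b = c = 0$ and $a\sqrt{1+\alpha^2} = \beta d$, recovering the extremal pair from the lower bound and confirming sharpness.
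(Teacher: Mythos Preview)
Your argument is correct, and your lower bound uses the same extremal pair as the paper (up to a harmless scalar). For the upper bound, however, the paper takes a noticeably cleaner route that avoids your sign-and-ordering case analysis entirely: it applies AM--GM \emph{before} adding, writing
\[
2\alpha\beta\,|u_1\pm u_2|\,|v_1\pm v_2|
=\frac{\alpha}{\sqrt{1+\alpha^2}}\cdot 2\Big(\sqrt{1+\alpha^2}\,|u_1\pm u_2|\Big)\Big(\beta\,|v_1\pm v_2|\Big)
\le \frac{\alpha}{\sqrt{1+\alpha^2}}\Big((1+\alpha^2)|u_1\pm u_2|^2+\beta^2|v_1\pm v_2|^2\Big),
\]
so that each of $\|x+y\|^2$, $\|x-y\|^2$ is bounded by $C$ times its ``diagonal'' part; summing and using the scalar parallelogram identity $|s+t|^2+|s-t|^2=2(s^2+t^2)$ gives $2C\big((1+\alpha^2)(a^2+b^2)+\beta^2(c^2+d^2)\big)$, and then one simply re-inserts the nonnegative cross terms $4\alpha\beta(ac+bd)$ to reach $2C(\|x\|^2+\|y\|^2)$. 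Your approach, by contrast, first subtracts to isolate $\alpha\beta(S-2T)$ and only then bounds it, which is why you need the case split on the signs of $u_1u_2$ and $v_1v_2$ to control $S$. Both methods hinge on the same AM--GM with the weight $\sqrt{1+\alpha^2}$; the paper's ordering of steps just sidesteps the bookkeeping you flagged as the main obstacle, while your version has the compensating advantage that the equality analysis at the end pinpoints the extremal pair directly.
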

\begin{proof}
Choose $(a, b)$, $(c, d)$ $\in \Lambda_2^{(2)}$. Then
\begin{align*}
\|(a, b)\pm (c, d)\|^2 &= \|(a\pm c, b\pm d)\|^2\\
&=\Big(|a\pm c|^2+\Big(\frac{\lambda_0|a\pm c|+(\lambda_1-\lambda_0)|b\pm d|}{\lambda_1}\Big)^2\Big)\\
&=\Big(1+\frac{\lambda_0^2}{\lambda_1^2}\Big)|a\pm c|^2+\frac{2\lambda_0(\lambda_1-\lambda_0)}{\lambda_1^2}|a\pm c||b\pm d|+\frac{(\lambda_1-\lambda_0)^2}{\lambda_1^2}|b\pm d|^2.
\end{align*}
Now
\begin{align*}
\frac{2\lambda_0(\lambda_1-\lambda_0)}{\lambda_1^2}|a\pm c||b\pm d| &=\frac{\lambda_0}{\sqrt{\lambda_0^2+\lambda_1^2}}\Big\{2\frac{\sqrt{\lambda_0^2+\lambda_1^2}}{\lambda_1}|a\pm c|\frac{(\lambda_1-\lambda_0)}{\lambda_1}|b\pm d|\Big\}\\
&\leq \frac{\lambda_0}{\sqrt{\lambda_0^2+\lambda_1^2}}\Big\{\frac{\lambda_0^2+\lambda_1^2}{\lambda_1^2}|a\pm c|^2+\frac{(\lambda_1-\lambda_0)^2}{\lambda_1^2}|b\pm d|^2\Big\}.
\end{align*}
Therefore
\begin{align*}
&\|(a, b)+ (c, d)\|^2+ \|(a, b)-(c, d)\|^2\\
&\leq \Big(1+\frac{\lambda_0}{\sqrt{\lambda_0^2+\lambda_1^2}}\Big)\Big\{\frac{\lambda_0^2+\lambda_1^2}{\lambda_1^2}(|a+c|^2+|a-c|^2)+
\frac{(\lambda_1-\lambda_0)^2}{\lambda_1^2}(|b+d|^2+|b-d|^2)\Big\}\\
&=2\Big(1+\frac{\lambda_0}{\sqrt{\lambda_0^2+\lambda_1^2}}\Big)\Big\{\frac{\lambda_0^2+\lambda_1^2}{\lambda_1^2}(|a|^2+|c|^2)+
\frac{(\lambda_1-\lambda_0)^2}{\lambda_1^2}(|b|^2+|d|^2)\Big\}\\
&\leq 2\Big(1+\frac{\lambda_0}{\sqrt{\lambda_0^2+\lambda_1^2}}\Big)\Big\{\frac{\lambda_0^2+\lambda_1^2}{\lambda_1^2}|a|^2+ \frac{2\lambda_0(\lambda_1-\lambda_0)}{\lambda_1^2}|a||b|+\frac{(\lambda_1-\lambda_0)^2}{\lambda_1^2}|b|^2\\
& ~~~~+ \frac{\lambda_0^2+\lambda_1^2}{\lambda_1^2}|c|^2+ \frac{2\lambda_0(\lambda_1-\lambda_0)}{\lambda_1^2}|c||d|+\frac{(\lambda_1-\lambda_0)^2}{\lambda_1^2}|d|^2\Big\}\\
&=\Big(1+\frac{\lambda_0}{\sqrt{\lambda_0^2+\lambda_1^2}}\Big)\big\{2(\|(a, b)\|^2+ \|(c, d)\|^2)\big\}.
\end{align*}
Hence $C_{NJ}(\Lambda_2^{(2)})\leq 1+\frac{\lambda_0}{\sqrt{\lambda_0^2+\lambda_1^2}}$.\\
Consider $(a, b)=(\lambda_1-\lambda_0, 0)$ and $(c, d)=(0, \sqrt{\lambda_0^2+\lambda_1^2})$. Then from the definition, we have
$$C_{NJ}(\Lambda_2^{(2)})\geq \frac{\|(a, b)+ (c, d)\|^2+ \|(a, b)-(c, d)\|^2}{2(\|(a, b)\|^2+ \|(c, d)\|^2)}=1+\frac{\lambda_0}{\sqrt{\lambda_0^2+\lambda_1^2}}.$$
Combining last two inequalities, we obtain $C_{NJ}(\Lambda_2^{(2)})=1+\frac{\lambda_0}{\sqrt{\lambda_0^2+\lambda_1^2}}$.
\end{proof}
\begin{corollary}
The James constant $J(\Lambda_2^{(2)})=\sqrt{2+\frac{2\lambda_0}{\sqrt{\lambda_0^2+\lambda_1^2}}}$.
\end{corollary}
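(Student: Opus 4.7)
The plan is to derive both bounds on $J(\Lambda_2^{(2)})$ from the value of $C_{NJ}(\Lambda_2^{(2)})$ just established. For the upper bound I would invoke the standard Kato--Maligranda--Takahashi--type inequality $J(X)^2\leq 2\,C_{NJ}(X)$, which is immediate from the definition: for any $x,y\in S(X)$,
$$C_{NJ}(X)\ \geq\ \frac{\|x+y\|^2+\|x-y\|^2}{4}\ \geq\ \frac{2\min(\|x+y\|,\|x-y\|)^2}{4},$$
so taking the supremum over unit vectors yields $2\,C_{NJ}(X)\geq J(X)^2$. Substituting the previous theorem gives $J(\Lambda_2^{(2)})\leq\sqrt{2+\frac{2\lambda_0}{\sqrt{\lambda_0^2+\lambda_1^2}}}$.

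For the matching lower bound I would recycle the very pair that saturated the estimate in the previous theorem, namely $(a,b)=(\lambda_1-\lambda_0,0)$ and $(c,d)=(0,\sqrt{\lambda_0^2+\lambda_1^2})$. A short computation with the formula for $\|\cdot\|_2$ shows both vectors share the common norm $N=(\lambda_1-\lambda_0)\sqrt{\lambda_0^2+\lambda_1^2}/\lambda_1$, so the corresponding unit vectors
$$x=\Big(\frac{\lambda_1}{\sqrt{\lambda_0^2+\lambda_1^2}},\ 0\Big),\qquad y=\Big(0,\ \frac{\lambda_1}{\lambda_1-\lambda_0}\Big)$$
both lie in $S(\Lambda_2^{(2)})$. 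Because the norm of $\Lambda_2^{(2)}$ depends only on the absolute values of the coordinates, one has $\|x+y\|=\|x-y\|$; expanding $\|x+y\|^2$ and writing $s=\sqrt{\lambda_0^2+\lambda_1^2}$ gives
$$\|x+y\|^2=\frac{\lambda_1^2}{s^2}+\Big(\frac{\lambda_0}{s}+1\Big)^2=\frac{\lambda_0^2+\lambda_1^2}{s^2}+1+\frac{2\lambda_0}{s}=2+\frac{2\lambda_0}{s}.$$
Hence $\min(\|x+y\|,\|x-y\|)=\sqrt{2+\frac{2\lambda_0}{\sqrt{\lambda_0^2+\lambda_1^2}}}$, delivering the lower bound $J(\Lambda_2^{(2)})\geq\sqrt{2+\frac{2\lambda_0}{\sqrt{\lambda_0^2+\lambda_1^2}}}$.

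There is no real obstacle here: the argument is a one-line application of $J(X)^2\leq 2\,C_{NJ}(X)$ together with the same explicit test pair that appeared in the proof of the preceding theorem. The only step requiring a moment of care is the arithmetic simplification of $\|x+y\|^2$, which collapses cleanly thanks to the defining identity $(\lambda_0/s)^2+(\lambda_1/s)^2=1$. Combining the two bounds yields the stated equality $J(\Lambda_2^{(2)})=\sqrt{2+\frac{2\lambda_0}{\sqrt{\lambda_0^2+\lambda_1^2}}}$.
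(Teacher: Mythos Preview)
Your proof is correct and follows essentially the same approach as the paper: the upper bound comes from the standard inequality $J(X)^2\leq 2\,C_{NJ}(X)$, and the lower bound from the explicit pair $x=\big(\lambda_1/\sqrt{\lambda_0^2+\lambda_1^2},\,0\big)$, $y=\big(0,\,\lambda_1/(\lambda_1-\lambda_0)\big)$. In fact you supply more detail than the paper does, since the paper merely asserts that equality is attained at this pair without carrying out the verification of $\|x\pm y\|^2=2+2\lambda_0/\sqrt{\lambda_0^2+\lambda_1^2}$.
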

\begin{proof}
It is well-known that $\frac{1}{2}\big(J(\Lambda_2^{(2)})\big)^2\leq C_{NJ}(\Lambda_2^{(2)})=1+\frac{\lambda_0}{\sqrt{\lambda_0^2+\lambda_1^2}}$. Therefore $J(\Lambda_2^{(2)})\leq\sqrt{2+\frac{2\lambda_0}{\sqrt{\lambda_0^2+\lambda_1^2}}}$. Equality occurs when $x=\Big(\frac{\lambda_1}{\sqrt{\lambda_0^2+\lambda_1^2}}, 0\Big)$ and $y=\Big(0, \frac{\lambda_1}{\lambda_1-\lambda_0}\Big)$.
\end{proof}
\begin{corollary}
If $\lambda_0=1$ and $\lambda_1=2$ then $C_{NJ}(ces_2^{(2)})=1+\frac{1}{\sqrt{5}}$ and $J(ces_2^{(2)})=\sqrt{2+\frac{2}{\sqrt{5}}}$ (see \cite{MAL1}, \cite{SAEJ}).
\end{corollary}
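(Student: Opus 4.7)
The plan is to obtain this Corollary as a direct numerical specialization of the two immediately preceding results, in conjunction with Remark 1(i). The key observation is that the hypothesis $\lambda_0 = 1$, $\lambda_1 = 2$ is exactly the initial data of the choice $\lambda_n = n+1$, under which the whole $\Lambda$-construction reduces to the Ces\`{a}ro construction; in particular, the two-dimensional section $\Lambda_2^{(2)}$ becomes $ces_2^{(2)}$ isometrically, since the norm $\|(u,v)\|_2 = (|u|^2 + ((\lambda_0|u| + (\lambda_1-\lambda_0)|v|)/\lambda_1)^2)^{1/2}$ degenerates to $(|u|^2 + ((|u|+|v|)/2)^2)^{1/2}$. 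Thus the von Neumann--Jordan and James constants are preserved under the identification.

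Having made this identification, I substitute $\lambda_0 = 1$ and $\lambda_1 = 2$ into the formula $C_{NJ}(\Lambda_2^{(2)}) = 1 + \lambda_0/\sqrt{\lambda_0^2 + \lambda_1^2}$ from the preceding Theorem. The arithmetic reduces to $\sqrt{\lambda_0^2 + \lambda_1^2} = \sqrt{1+4} = \sqrt{5}$, so $C_{NJ}(ces_2^{(2)}) = 1 + 1/\sqrt{5}$. An identical substitution into the preceding Corollary $J(\Lambda_2^{(2)}) = \sqrt{2 + 2\lambda_0/\sqrt{\lambda_0^2 + \lambda_1^2}}$ produces $J(ces_2^{(2)}) = \sqrt{2 + 2/\sqrt{5}}$, which agrees with the known values from \cite{MAL1} and \cite{SAEJ}.

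There is no substantive obstacle in the argument: the only steps to verify are the isometric reduction $\Lambda_2^{(2)} \cong ces_2^{(2)}$ under $\lambda_n = n+1$, which is immediate from the two-dimensional norm formula, and the elementary computation $\sqrt{1+4} = \sqrt{5}$. The written-out proof can therefore be compressed into a single displayed calculation citing the preceding Theorem and Corollary, with no further machinery required.
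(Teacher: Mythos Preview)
Your proposal is correct and follows exactly the approach implicit in the paper: the corollary is stated there without proof because it is an immediate specialization of the preceding Theorem and Corollary via Remark~1(i), obtained by substituting $\lambda_0=1$, $\lambda_1=2$ (so that $\Lambda_2^{(2)}=ces_2^{(2)}$) into the formulas $C_{NJ}(\Lambda_2^{(2)})=1+\lambda_0/\sqrt{\lambda_0^2+\lambda_1^2}$ and $J(\Lambda_2^{(2)})=\sqrt{2+2\lambda_0/\sqrt{\lambda_0^2+\lambda_1^2}}$. Your write-up is already more detailed than necessary.
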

\begin{corollary}
If $\lambda_0=q_0$ and $\lambda_1=q_0+q_1$ then $C_{NJ}(ces[2, q]^{(2)})=1+\frac{q_0}{\sqrt{2q_0^2+2q_0q_1+q_1^2}}$ and $J(ces[2, q]^{(2)})=\sqrt{2+\frac{2q_0}{\sqrt{2q_0^2+2q_0q_1+q_1^2}}}$.
\end{corollary}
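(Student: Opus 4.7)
The plan is to sandwich $C_{NJ}(\Lambda_2^{(2)})$ between matching upper and lower bounds. For the upper bound I would expand the squared norm $\|(a,b)\pm(c,d)\|^2$ directly from the defining formula, obtaining three terms: $\frac{\lambda_0^2+\lambda_1^2}{\lambda_1^2}|a\pm c|^2$, a mixed term $\frac{2\lambda_0(\lambda_1-\lambda_0)}{\lambda_1^2}|a\pm c||b\pm d|$, and $\frac{(\lambda_1-\lambda_0)^2}{\lambda_1^2}|b\pm d|^2$. The heart of the proof is the reweighting
$$\frac{2\lambda_0(\lambda_1-\lambda_0)}{\lambda_1^2}|a\pm c||b\pm d| = \frac{\lambda_0}{\sqrt{\lambda_0^2+\lambda_1^2}}\cdot 2\Bigl(\tfrac{\sqrt{\lambda_0^2+\lambda_1^2}}{\lambda_1}|a\pm c|\Bigr)\Bigl(\tfrac{\lambda_1-\lambda_0}{\lambda_1}|b\pm d|\Bigr),$$
to which I apply $2st\leq s^2+t^2$. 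This precisely manufactures the constant $\frac{\lambda_0}{\sqrt{\lambda_0^2+\lambda_1^2}}$; choosing any other split in the AM-GM step would not yield the sharp factor, and recognizing this particular weighting is the conceptual obstacle in the proof.

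After applying the AM-GM bound to both the $+$ and the $-$ sign and summing, the right-hand side becomes $(1+\frac{\lambda_0}{\sqrt{\lambda_0^2+\lambda_1^2}})$ times a quadratic in $|a\pm c|,|b\pm d|$. Applying the ordinary parallelogram identity $|a+c|^2+|a-c|^2=2(|a|^2+|c|^2)$ (and the same for $b,d$) collapses this to $2(1+\frac{\lambda_0}{\sqrt{\lambda_0^2+\lambda_1^2}})\Bigl(\frac{\lambda_0^2+\lambda_1^2}{\lambda_1^2}(|a|^2+|c|^2)+\frac{(\lambda_1-\lambda_0)^2}{\lambda_1^2}(|b|^2+|d|^2)\Bigr)$. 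To finish the upper bound I observe that the bracket here is at most $\|(a,b)\|^2+\|(c,d)\|^2$, because expanding $\|(a,b)\|^2$ gives exactly those two terms plus the nonnegative cross term $\frac{2\lambda_0(\lambda_1-\lambda_0)}{\lambda_1^2}|a||b|$ (and similarly for $(c,d)$). Dividing through yields $C_{NJ}(\Lambda_2^{(2)})\leq 1+\frac{\lambda_0}{\sqrt{\lambda_0^2+\lambda_1^2}}$.

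For the matching lower bound I would exhibit an extremal pair. Guided by when equality is achieved in the AM-GM step ($\frac{\sqrt{\lambda_0^2+\lambda_1^2}}{\lambda_1}|a\pm c|=\frac{\lambda_1-\lambda_0}{\lambda_1}|b\pm d|$) and in the cross-term step (one of $|a||b|,|c||d|$ vanishing), I take $(a,b)=(\lambda_1-\lambda_0,0)$ and $(c,d)=(0,\sqrt{\lambda_0^2+\lambda_1^2})$. A direct computation gives $\|(a,b)\|^2=\|(c,d)\|^2=\frac{(\lambda_1-\lambda_0)^2(\lambda_0^2+\lambda_1^2)}{\lambda_1^2}$, while $\|(a,b)\pm(c,d)\|^2=(\lambda_1-\lambda_0)^2\bigl(1+\tfrac{(\lambda_0+\sqrt{\lambda_0^2+\lambda_1^2})^2}{\lambda_1^2}\bigr)$, and simplifying the ratio of sums yields precisely $1+\frac{\lambda_0}{\sqrt{\lambda_0^2+\lambda_1^2}}$. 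Combining the two bounds proves the theorem. The only nontrivial step is the reverse-engineered AM-GM weighting in the upper bound; the rest is an expansion-and-simplification exercise whose form is dictated by the definition of $\|\cdot\|_2$ on $\Lambda_2^{(2)}$.
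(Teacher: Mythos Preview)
Your argument reproduces, essentially verbatim, the paper's proof of the preceding theorem $C_{NJ}(\Lambda_2^{(2)})=1+\frac{\lambda_0}{\sqrt{\lambda_0^2+\lambda_1^2}}$: the same expansion of $\|(a,b)\pm(c,d)\|^2$, the same weighted AM--GM splitting to extract the factor $\frac{\lambda_0}{\sqrt{\lambda_0^2+\lambda_1^2}}$, the same use of the scalar parallelogram identity, the same observation that the nonnegative cross terms $\frac{2\lambda_0(\lambda_1-\lambda_0)}{\lambda_1^2}|a||b|$ give the final inequality, and the same extremal pair $(\lambda_1-\lambda_0,0)$, $(0,\sqrt{\lambda_0^2+\lambda_1^2})$ for the lower bound. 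The corollary itself then follows by the trivial substitution $\lambda_0=q_0$, $\lambda_1=q_0+q_1$, so that $\lambda_0^2+\lambda_1^2=2q_0^2+2q_0q_1+q_1^2$; you do not spell this out, but it is immediate.

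What your proposal omits is the second half of the statement, the value of $J(ces[2,q]^{(2)})$. In the paper this is handled (in the preceding corollary for general $\Lambda_2^{(2)}$) by invoking the standard inequality $\tfrac{1}{2}J(X)^2\le C_{NJ}(X)$ to get $J(\Lambda_2^{(2)})\le\sqrt{2+\tfrac{2\lambda_0}{\sqrt{\lambda_0^2+\lambda_1^2}}}$, and then checking equality with the explicit unit vectors $x=\bigl(\tfrac{\lambda_1}{\sqrt{\lambda_0^2+\lambda_1^2}},0\bigr)$ and $y=\bigl(0,\tfrac{\lambda_1}{\lambda_1-\lambda_0}\bigr)$. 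You should add this short step (and then substitute) to complete the corollary as stated.
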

The following theorem is a counter part of a theorem presented in (\cite{SAEJ}, Theorem 15, p.536) for the sequence space $\Lambda_p^{(2)}$. We repeat here for the sake of completeness.
\begin{theorem}
The von Neumann-Jordan constant $C_{NJ}(\Lambda_p^{(2)})=\Big(\displaystyle\sup_{0\leq t\leq 1}\frac{\psi(t)}{\psi_2(t)}\Big)^2$, $1<p\leq 2$ where\\
$\psi(t)=\Big(\frac{\lambda_1^p(1-t)^p}{\lambda_0^p+\lambda_1^p}+\Big(\frac{\lambda_0(1-t)}{(\lambda_0^p+\lambda_1^p)^{1/p}}+t\Big)^p\Big)^{\frac{1}{p}}$ and
$\psi_2(t)=\sqrt{\big\{(1-t)^2+t^2\big\}}$.
\end{theorem}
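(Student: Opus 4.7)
The plan is to reduce the computation to a standard argument for two-dimensional absolute normalized norms on $\mathbb{R}^2$, closely following Saejung's scheme for $ces_p^{(2)}$ (\cite{SAEJ}, Theorem 15).

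The first step is a normalizing isometry. Observe that $\|e_0\|_p = (\lambda_0^p+\lambda_1^p)^{1/p}/\lambda_1$ and $\|e_1\|_p = (\lambda_1-\lambda_0)/\lambda_1$, and define the linear map $T \colon \Lambda_p^{(2)} \to \mathbb{R}^2$ by $T(u,v) = (u\|e_0\|_p,\, v\|e_1\|_p)$. Equipping the codomain with the norm $\|(U,V)\|_\psi := \|T^{-1}(U,V)\|_p$, a direct substitution yields
\begin{equation*}
\|(U,V)\|_\psi = \left(\frac{\lambda_1^p |U|^p}{\lambda_0^p+\lambda_1^p} + \left(\frac{\lambda_0 |U|}{(\lambda_0^p+\lambda_1^p)^{1/p}} + |V|\right)^p\right)^{1/p},
\end{equation*}
which is absolute and normalized ($\|(1,0)\|_\psi = \|(0,1)\|_\psi = 1$) and whose associated function $\|(1-t,t)\|_\psi$ coincides with the $\psi(t)$ of the statement. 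Since $T$ is an isometry and the von Neumann-Jordan constant is an isometric invariant, it suffices to compute $C_{NJ}(\mathbb{R}^2, \|\cdot\|_\psi)$.

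The second step establishes the pointwise inequality $\psi(t) \geq \psi_2(t)$ on $[0,1]$, which is the decisive input allowing the clean formula. For $p = 2$ this is transparent from the expansion $\psi(t)^2 = \psi_2(t)^2 + 2\lambda_0 t(1-t)/\sqrt{\lambda_0^2+\lambda_1^2}$. For $1 < p < 2$, writing $\psi(t)^p = a^p(1-t)^p + (b(1-t)+t)^p$ with $a = \lambda_1/(\lambda_0^p+\lambda_1^p)^{1/p}$, $b = \lambda_0/(\lambda_0^p+\lambda_1^p)^{1/p}$, and $a^p + b^p = 1$, the naive dominance $\|\cdot\|_{\ell_p} \geq \|\cdot\|_{\ell_2}$ on $\mathbb{R}^2$ alone is insufficient (the residual term $(a^2+b^2-1)(1-t)^2 + 2bt(1-t)$ can change sign), so one must use the convexity of $s \mapsto s^p$ together with $a^p + b^p = 1$ more carefully. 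I anticipate this step to be the main technical obstacle.

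The third step executes the matching upper and lower bounds. Set $C := \sup_{t \in [0,1]} \psi(t)/\psi_2(t) \geq 1$. From $\psi \geq \psi_2$ one obtains $\|\cdot\|_2 \leq \|\cdot\|_\psi \leq C \|\cdot\|_2$ on $\mathbb{R}^2$. For any $x, y \in \mathbb{R}^2$, the parallelogram identity in the Euclidean norm gives
\begin{equation*}
\|x+y\|_\psi^2 + \|x-y\|_\psi^2 \leq C^2\bigl(\|x+y\|_2^2 + \|x-y\|_2^2\bigr) = 2C^2\bigl(\|x\|_2^2 + \|y\|_2^2\bigr) \leq 2C^2\bigl(\|x\|_\psi^2 + \|y\|_\psi^2\bigr),
\end{equation*}
so $C_{NJ}(\mathbb{R}^2, \|\cdot\|_\psi) \leq C^2$. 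For the lower bound, take the test vectors $x = (1,0)$ and $y = (0, r)$ with $r > 0$: then $\|x\|_\psi = 1$, $\|y\|_\psi = r$, and by absoluteness $\|x \pm y\|_\psi = (1+r)\psi(r/(1+r))$. Substituting $s = r/(1+r) \in (0,1)$, the von Neumann-Jordan ratio reduces to $\psi(s)^2/\psi_2(s)^2$, and letting $r$ range over $(0,\infty)$ gives $C_{NJ}(\mathbb{R}^2, \|\cdot\|_\psi) \geq C^2$. Combining the two bounds and transporting back through the isometry $T$ yields $C_{NJ}(\Lambda_p^{(2)}) = C^2$, as required.
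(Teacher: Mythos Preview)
Your overall strategy matches the paper's: pass via an isometry to an absolute normalized norm on $\mathbb{R}^2$, verify $\psi \geq \psi_2$, and conclude $C_{NJ} = (\sup_{[0,1]} \psi/\psi_2)^2$. The paper simply invokes the Saito--Kato--Takahashi theorem \cite{SAITO} for that last step, whereas your third step reproves it directly via the sandwich $\|\cdot\|_2 \leq \|\cdot\|_\psi \leq C\|\cdot\|_2$ and the parallelogram identity; that self-contained argument is correct and is a pleasant alternative to the citation.

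The genuine gap is your second step: you explicitly flag $\psi(t) \geq \psi_2(t)$ for $1 < p < 2$ as ``the main technical obstacle'' without resolving it. In fact it is a one-line computation, and the paper handles it exactly so. With your notation $a^p + b^p = 1$, the superadditivity $(A+B)^p \geq A^p + B^p$ for $A,B \geq 0$, $p \geq 1$ (an immediate consequence of the convexity of $s \mapsto s^p$ together with $f(0)=0$) gives
\[
\psi(t)^p \;=\; a^p(1-t)^p + \bigl(b(1-t)+t\bigr)^p \;\geq\; a^p(1-t)^p + b^p(1-t)^p + t^p \;=\; (1-t)^p + t^p,
\]
whence $\psi(t) \geq \bigl((1-t)^p+t^p\bigr)^{1/p} \geq \bigl((1-t)^2+t^2\bigr)^{1/2} = \psi_2(t)$, the last inequality by monotonicity of $\ell_p$-norms (this is where $p \leq 2$ enters). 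Your attempted route of applying $\|\cdot\|_{\ell_p} \geq \|\cdot\|_{\ell_2}$ to the vector $\bigl(a(1-t),\, b(1-t)+t\bigr)$ is the wrong decomposition; the point is to first extract the full $(1-t)^p + t^p$ at the $p$-th power level using superadditivity, and only then pass to $\ell_2$.
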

\begin{proof}
For $(x, y)\in \mathbb{R}^2$, we define a norm on $\mathbb{R}^2$ as
\begin{center}
$|(x, y)|=\Big\|\Big(\frac{\lambda_1x}{(\lambda_0^p+\lambda_1^p)^{1/p}}, \frac{\lambda_1y}{(\lambda_1-\lambda_0)}\Big)\Big\|_{\Lambda_p^{(2)}}$.
\end{center}
It is easy to verify that $|(x, y)|=|(|x|, |y|)|$ and $|(1, 0)|=1=|(0, 1)|$, that is $|(x, y)|$ defines an absolute and normalized norm.
Additionally, choose a map $T: (\mathbb{R}^2, |(,)|)\rightarrow \Lambda_p^{(2)}$ defined as
\begin{center}
$T\big((x, y)\big)=\Big(\frac{\lambda_1x}{(\lambda_0^p+\lambda_1^p)^{1/p}}, \frac{\lambda_1y}{(\lambda_1-\lambda_0)}\Big)$.
\end{center}
It is easy to show that $T$ is an isometric isomorphism, i.e, sequence spaces $\Lambda_p^{(2)}$ are isometrically isomorphic to $(\mathbb{R}^2, |(,)|)$. Using Saito et al. result (\cite{SAITO}, Theorem 1, p. 521) it is enough to prove that $\psi\geq \psi_2$ and observe that
\begin{center}
$\Big(\frac{\lambda_0(1-t)}{(\lambda_0^p+\lambda_1^p)^{1/p}}+t\Big)^p \geq \frac{\lambda_0^p(1-t)^p}{\lambda_0^p+\lambda_1^p}+t^p$,
\end{center}
which implies $\psi(t)\geq \big\{(1-t)^p+t^p\big\}^{\frac{1}{p}}\geq \big\{(1-t)^2+t^2\big\}^{\frac{1}{2}}=\psi_2(t)$. Hence the result.
\end{proof}

\end{document}